\def \smvx {circle[radius = .08][fill = black]}
\tikzstyle{edge}=[very thick]
\tikzstyle{diredge}=[postaction={decorate,decoration={markings,
\newcommand{\defPt}[3]{
	\def \pt {(#1, #2)}
	\coordinate [at = \pt, name = #3];
}
\newcommand{\ignore}[1]{}
\setlist{nolistsep}
\def \PFq[#1]{\mathbf{P}^#1(\mathbb{F}_q)}
\def \PF[#1,#2]{PG(#1,#2)}
\def \eps{\varepsilon}
\DeclareMathOperator{\per}{Perm}
\DeclareMathOperator{\match}{PM}
\DeclareMathOperator{\fix}{Fix}
\declaretheorem[name=Theorem]{theorem}
\newtheorem{conjecture}[theorem]{Conjecture}
\newtheorem{lemma}[theorem]{Lemma}
\newtheorem{corollary}[theorem]{Corollary}
\newtheorem{proposition}[theorem]{Proposition}
\newtheorem{observation}[theorem]{Observation}
\newtheorem{claim}[theorem]{Claim}
\begin{document}

\title{Perfect matchings and derangements on graphs.}
\author{Matija Bucic\thanks{ETH Z\"urich, Supported by SNSF grant 200021-175573}, Pat Devlin\thanks{Yale University.}, Mo Hendon\thanks{University of Georgia.}, Dru Horne\footnotemark[1], Ben Lund\thanks{Princeton University. Supported by DMS-1344994.}}
\maketitle

\begin{abstract}
We show that each perfect matching in a bipartite graph $G$ intersects at least half of the perfect matchings in $G$.
This result has equivalent formulations in terms of the permanent of the adjacency matrix of a graph, and in terms of derangements and permutations on graphs.
We give several related results and open questions.

\end{abstract}

\section{Introduction}

Our main result concerns perfect matchings in bipartite graphs.
\begin{theorem}\label{th:matchingInBipartiteGraph}
	Let $G$ be a bipartite graph having a perfect matching $M$.
	Then $M$ has non-empty intersection with at least half of the perfect matchings in $G$.
\end{theorem}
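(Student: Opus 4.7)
I will first rephrase the statement in terms of permutations. Label the bipartition of $G$ as $A=\{a_1,\ldots,a_n\}$, $B=\{b_1,\ldots,b_n\}$ so that $M=\{a_ib_i : i\in[n]\}$. Perfect matchings of $G$ are then in bijection with permutations $\sigma\in S_n$ satisfying $a_ib_{\sigma(i)}\in E(G)$, and the PMs in $\mathcal{M}_0$ (disjoint from $M$) correspond to the derangements in this set. The theorem is thus equivalent to: at most half of these permutations are derangements.

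The key structural fact is that when $M'\in\mathcal{M}_0$, the subgraph $M\cup M'$ is a disjoint union of even alternating cycles $C_1,\ldots,C_k$, each of length $\geq 4$, and for each $j$ the symmetric difference $M'\triangle E(C_j)$ is a perfect matching that agrees with $M$ on $V(C_j)$ and therefore lies in $\mathcal{M}_1$. This motivates the map $\phi\colon\mathcal{M}_0\to\mathcal{M}_1$ defined by $\phi(M')=M'\triangle E(C(M'))$, where $C(M')$ is the unique alternating cycle of $M\cup M'$ containing the vertex $a_1$; by construction $a_1b_1\in\phi(M')$, so the target is indeed in $\mathcal{M}_1$.

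The main obstacle is that $\phi$ is not injective: from $\phi(M')=M''$ one reads off $V(C(M'))=V(M\cap M'')$, but the alternating pattern of $M'$-edges inside this vertex set is not determined by $M''$, so two different disjoint PMs that agree outside the cycle can collide. I plan to compensate by induction on $n$ via the double count
\[
|\mathcal{M}_0|\;\leq\;\sum_{M'\in\mathcal{M}_0}c(M')\;=\;\sum_{M''\in\mathcal{M}_1}\beta\bigl(V(M\cap M'')\bigr),
\]
where $c(M')$ is the number of alternating cycles in $M\cup M'$ and $\beta(W)$ counts PMs of $G[W]$ disjoint from $M|_W$ whose union with $M|_W$ is a single cycle. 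Using the inductive hypothesis on the strictly smaller bipartite graph $G[W]$ with perfect matching $M|_W$ to bound $\beta(W)\leq\tfrac12|\mathcal{M}(G[W])|$, combined with the bijection between the $M''\in\mathcal{M}$ having $V(M\cap M'')=W$ and the PMs of $G[V\setminus W]$ disjoint from $M|_{V\setminus W}$, should convert the right-hand side into a quantity bounded by $|\mathcal{M}_1|$.

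The most delicate step is matching constants exactly, since the bipartite cycle case already saturates $|\mathcal{M}_0|=|\mathcal{M}_1|$ and leaves essentially no slack in the induction. A cleaner framework may be to first establish the identity $|\mathcal{M}(G)|=\sum_W\gamma(W)$ in which $W$ ranges over the $M$-closed subsets of $V(G)$ and $\gamma(W)$ denotes the number of PMs of $G[W]$ disjoint from $M|_W$; the theorem then reduces to the single inequality $\gamma(V)\leq\sum_{W\subsetneq V}\gamma(W)$, which is directly amenable to induction on $n$ via the map $M'\mapsto M'|_{V\setminus V(C(M'))}$.
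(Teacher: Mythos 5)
You have set up the problem correctly and, importantly, you have correctly identified the central obstacle: the cycle-flip map $\phi(M')=M'\triangle E(C(M'))$ is not injective because its image forgets the internal structure of the alternating cycle. The difficulty is that your double-counting scheme does not close this gap; it only relocates it. In the chain
\[
|\mathcal{M}_0|\;\leq\;\sum_{M''\in\mathcal{M}_1}\beta\bigl(V(M\cap M'')\bigr)\;=\;\sum_{\emptyset\neq W}\gamma(V\setminus W)\,\beta(W),
\]
the term $W=V$ (coming from $M''=M$) contributes $\beta(V)$, the number of disjoint matchings forming a single Hamilton alternating cycle with $M$. Bounding this term by $\tfrac12|\mathcal{M}(G[V])|=\tfrac12|\mathcal{M}(G)|$ is circular, since $G[V]=G$ is not strictly smaller; and even granting it, the resulting bound on that one term is already $\tfrac12|\mathcal{M}_0|+\tfrac12|\mathcal{M}_1|$, so the chain yields at best $|\mathcal{M}_0|\leq|\mathcal{M}_1|+2\,(\text{nonnegative leftover terms})$, which is weaker than the claim. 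The same issue survives your final reformulation: the exact identity is $\gamma(V)=\sum_{W\ni a_1}\beta(W)\,\gamma(V\setminus W)$, and in the extremal case where every disjoint matching forms a single Hamilton alternating cycle with $M$ this requires $\beta(V)\leq\sum_{W\subsetneq V}\gamma(W)$ --- a genuinely new statement about Hamilton alternating cycles that no induction on proper $M$-closed subsets can supply.

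What is missing is exactly the content of the paper's key lemma (Lemma \ref{th:HamiltonMap}, stated in the equivalent language of derangements on a digraph). There, each Hamilton cycle $C$ through a distinguished vertex $v$ is assigned a canonical chord, the \emph{first minimal forward chord}, and $C$ is mapped to the shorter cycle cut off by that chord together with fixed points on the remaining vertices. The choice of chord is engineered so that $C$ is reconstructible from the image: the chord's tail is the first vertex of the short cycle having an edge into the fixed-point set, and minimality of the chord forces the cyclic order of the fixed points to be recoverable. In matching language this is an explicit injection from the $\beta(W)$ Hamilton-type disjoint matchings of $G[W]$ into intersecting matchings whose intersection with $M$ lies inside $W$; applying it to the $a_1$-cycle of an arbitrary disjoint matching while carrying the other cycles along unchanged gives the injection $\mathcal{M}_0\to\mathcal{M}_1$ directly, with no counting or induction needed. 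Your bookkeeping (the decomposition $|\mathcal{M}|=\sum_W\gamma(W)$ and the reduction to $\gamma(V)\leq\sum_{W\subsetneq V}\gamma(W)$) is sound, but without an injective replacement for $\phi$ on single Hamilton alternating cycles the argument does not go through.
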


The value of $1/2$ in this conclusion can't be improved, since even cycles have two perfect matchings, which are disjoint.  Moreover, the hypothesis that $G$ be bipartite is necessary (e.g., $K_4$ has three perfect matchings, which are disjoint). In fact, it turns out that for general graphs, the behavior is quite different.
\begin{theorem}\label{th:matchingInGeneral}
Let $G$ be a graph on $2n$ vertices having a perfect matching $M$.  Then $M$ has non-empty intersection with at least $1/(2^{n-1}+1)$ of the perfect matchings in $G$.  Moreover, if $n$ is even there is a $3$-regular graph on $2n$ vertices having $2^{n/2} + 1$ perfect matchings, one of which is disjoint from all the others.
\end{theorem}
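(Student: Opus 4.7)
The statement splits into two independent assertions — a lower bound on the fraction of $M$-intersecting matchings and an explicit construction — and my plan treats each separately.

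For the lower bound, the plan is to induct on $n$, exploiting the 2-factor structure of $M \cup M'$ for $M' \in D$, where $D := \{M' : M' \cap M = \emptyset\}$ and $I := \{M' : M' \cap M \neq \emptyset\}$ (the target reduces to $|D| \leq 2^{n-1}|I|$). The crucial observation is that for any $M' \in D$ the graph $F = M \cup M'$ is a 2-factor whose components are alternating even cycles $C_1, \ldots, C_k$ with $k \leq n/2$, because each such cycle has length at least $4$. The 2-factor $F$ contains exactly $2^k$ perfect matchings of $G$ — choose, for each cycle, whether to take $M$'s or $M'$'s half — of which precisely one lies in $D$ (namely $M'$) and the remaining $2^k - 1$ lie in $I$. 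I plan to exploit this by double-counting pairs $(M', M'')$ with $M' \in D$ and $M''$ an $I$-matching contained in $M \cup M'$. Summing by $M'$ first gives $\sum_{M' \in D}(2^{k(M')}-1)$. Summing by $M''$ first gives $|D|$ from the term $M'' = M$ (which sits inside every 2-factor containing $M$) plus $\sum_{M'' \in I \setminus \{M\}} p(G[V_0(M'')], M|_{V_0(M'')})$, where $V_0(M'') := V(M \cap M'')$ and $p(H, N)$ denotes the number of perfect matchings of $H$ disjoint from $N$. I will bound $2^{k(M')}-1 \leq 2^{n/2}-1 \leq 2^{n-1}-1$ on the left, apply the inductive hypothesis (available because $|V_0(M'')| < 2n$ for $M'' \neq M$) to each $p$-term on the right, and rearrange to obtain $|D| \leq 2^{n-1}|I|$.

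For the construction, the plan is to exhibit an explicit 3-regular graph on $2n$ vertices. The base case $n = 2$ is $K_4$, which has three matchings, any one of which is disjoint from the other two. For general even $n$, I will build $G$ so that $G \setminus E(M)$ is a disjoint union of $n/2$ 4-cycles (these contribute $2^{n/2}$ matchings in $D$ via independent choice within each 4-cycle), and choose the matching $M$ to be \emph{forcing}: any matching of $G$ which uses even one edge of $M$ is forced to use every edge of $M$. The cleanest recipe is to thread the 4-cycle gadgets together with $M$-edges forming a chain whose local completions propagate uniquely; for $n = 4$ this yields the graph obtained from the 3-cube $Q_3$ by swapping two opposite ``rung'' edges for the corresponding face-diagonals, which is readily verified by case analysis to have exactly $2^{n/2}+1 = 5$ perfect matchings, with $M$ disjoint from the other four.

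The hardest step is closing the induction in the lower bound: the quantities $p(G[V_0(M'')], M|_{V_0(M'')})$ measure matching counts inside a proper subgraph, not the global $I(G)$, so naively plugging the inductive bound into each term does not obviously combine into the desired global inequality. I expect the resolution will require either a strengthening of the inductive hypothesis (for instance tracking a weighted quantity rather than the bare ratio) or a more delicate accounting that avoids having to relate sub-graph matching counts back to the global $I(G)$. The construction part is mostly routine once the gadget is in hand; the only care needed is to verify that no ``mixed'' matching using some but not all edges of $M$ exists, which is handled by a local analysis of the chain.
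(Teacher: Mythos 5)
Your construction is essentially the one in the paper: the paper also builds the graph so that deleting the special matching $M_0$ leaves a disjoint union of $4$-cycles (copies of $K_{2,2}$), threaded together by the edges of $M_0$ in a single ring so that any perfect matching using one edge of $M_0$ is forced to be $M_0$ itself. Your $K_4$ base case and your recipe for general even $n$ are correct in spirit, though you only verify $n=2$ and $n=4$ explicitly; writing down the general gadget chain and checking $3$-regularity and the forcing property is routine and matches the paper.

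The lower bound, however, has a genuine gap, and it is worse than the ``hardest step'' you flag. Your double count of pairs $(M',M'')$ with $M'\in D$ and $M''\in I$, $M''\subseteq M\cup M'$, gives the identity
\[
\sum_{M'\in D}\bigl(2^{k(M')}-1\bigr)\;=\;|D|\;+\;\sum_{M''\in I\setminus\{M\}} p\bigl(G[V_0(M'')],\,M\cap M''\bigr),
\]
and both sides are dominated by contributions that cannot be converted into a bound of the form $|D|\le 2^{n-1}|I|$. The term $M''=M$ already contributes $|D|$ on the right, so after subtracting it the identity reads $\sum_{M'\in D}(2^{k(M')}-2)=\sum_{M''\ne M}p(\cdots)$, and this is \emph{identically} $0=0$ whenever $M\cup M'$ is a Hamilton cycle for every $M'\in D$ --- which is exactly what happens in the extremal construction $H$ (there $|D|=2^{n/2}$, $|I|=1$, every $2$-factor $M_0\cup M'$ is Hamiltonian, and the only $I$-matching inside any of these $2$-factors is $M_0$ itself). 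So no strengthening of the inductive hypothesis applied to the $p$-terms can rescue the argument: in the hard cases those terms are simply absent, and the double count carries no information. Moreover, the inductive bound you would apply controls $p(G[V_0],\cdot)$ by matching counts \emph{inside} $G[V_0]$, which, as you note, do not relate back to $|I|$ for $G$. The paper's proof takes a different route that you should adopt: writing $M=\{v_1u_1,\dots,v_nu_n\}$, it forms the $2^{n-1}$ bipartitions $\{L,R\}$ obtained by splitting each pair $\{v_i,u_i\}$, lets $G_i$ be the bipartite subgraph of edges crossing the partition, and applies Theorem~\ref{th:matchingInBipartiteGraph} to each $G_i$ (in which $M$ is still a perfect matching) to get $a_i\ge b_i$. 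Since $M\cup M'$ is always bipartite, every $M'\in D$ lies in at least one $G_i$, so $|D|\le\sum_i b_i\le\sum_i a_i$, while every intersecting matching lies in at most $2^{n-1}$ of the $G_i$, so $\sum_i a_i\le 2^{n-1}|I|$. Your proposal never invokes Theorem~\ref{th:matchingInBipartiteGraph}, which is the engine of the paper's argument.
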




This work was originally motivated as a study of derangements and permutations on graphs, as introduced by Clark \cite{clark2013graph}, and this approach leads to several related questions.
Let $G=(E,V)$ be a directed, loopless graph.
A derangement on $G$ is a bijection $\sigma:V \rightarrow V$ such that $(v,\sigma(v)) \in E$ for all $v \in V$.
A permutation on $G$ is a bijection $\sigma:V \rightarrow V$ such that either $(v, \sigma(v)) \in E$ or $v = \sigma(v)$ for all $v \in V$.

For any directed graph $G$, denote by $(d/p)_G$ the ratio of the number of derangements on $G$ to the number of permutations on $G$.
We also consider derangements and permutations on undirected graphs, by treating them as directed graphs for which $(u,v)$ is an edge if and only if $(v,u)$ is an edge.
From these definitions, it is easy to see that $(d/p)_{K_n}$ is the probability that a uniformly random permutation on $[n]$ is a derangement.
A classic application of inclusion-exclusion shows that
$$\lim_{n \rightarrow \infty}(d/p)_{K_n} = 1/e.$$

Many graphs do not admit any derangements, and hence $(d/p)$ may be as small as $0$.
In the other direction, if $C$ is a directed cycle, then there is one derangement and two permutations on $C$, and hence $(d/p)_C = 1/2$.
Theorem \ref{th:matchingInBipartiteGraph} is equivalent to the claim that $(d/p)$ is never larger than $1/2$.

\begin{restatable}{theorem}{ratioHalf}\label{thm:ratioHalf}
If $G$ is a loopless directed graph, then
$$(d/p)_G \leq 1/2,$$
with equality if and only if $G$ is a directed cycle.
\end{restatable}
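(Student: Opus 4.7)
The plan is to deduce Theorem \ref{thm:ratioHalf} directly from Theorem \ref{th:matchingInBipartiteGraph} via a standard bipartite-doubling construction. Given a loopless directed graph $G = (V, E)$, I form the bipartite graph $H$ on vertex parts $A = \{a_v : v \in V\}$ and $B = \{b_v : v \in V\}$, placing an edge $a_u b_v$ in $H$ precisely when $u = v$ or $(u,v) \in E$. The set $M_0 := \{a_v b_v : v \in V\}$ is then a perfect matching of $H$.

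The key correspondence is that perfect matchings of $H$ are in bijection with permutations on $G$ via $\sigma \leftrightarrow M_\sigma := \{a_v b_{\sigma(v)} : v \in V\}$. Since $M_\sigma \cap M_0 = \{a_v b_v : \sigma(v) = v\}$, this bijection sends derangements of $G$ exactly to the perfect matchings of $H$ that are disjoint from $M_0$. Applying Theorem \ref{th:matchingInBipartiteGraph} to $(H, M_0)$ tells us at most half of the perfect matchings of $H$ are disjoint from $M_0$, so $(d/p)_G \leq 1/2$.

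The main obstacle is the equality case, for which I would argue directly on $G$. Writing a permutation as its fixed-point set $F$ together with a derangement on the complement, the hypothesis $(d/p)_G = 1/2$ rewrites as
$$d(G) \;=\; 1 \;+\; \sum_{\emptyset \neq F \subsetneq V} d(G[V \setminus F]),$$
where the $1$ comes from the empty derangement in the $F = V$ term. My strategy is to show (i) $d(G) = 1$, (ii) the unique derangement traces a Hamilton cycle $v_0 \to v_1 \to \cdots \to v_{n-1} \to v_0$, and (iii) $G$ has no edge outside this cycle. For (i) and (ii), any derangement decomposing into $k \geq 2$ cycles yields, by restriction to each of the $2^k - 2$ proper non-empty unions of its cycles, distinct derangements on proper induced subgraphs; a counting argument combined with the identity $\sum_\pi (2^{k(\pi)}-2) = \sum_{\emptyset \neq U \subsetneq V} d(G[U])\,d(G[V \setminus U])$ shows this is incompatible with the tight count unless every derangement is a Hamilton cycle and $d(G) = 1$. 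For (iii), any extra edge $v_i \to v_j$ with $j \not\equiv i+1 \pmod n$ creates a shorter directed cycle on the induced subgraph $\{v_i, v_j, v_{j+1}, \ldots, v_{i-1}\}$, giving a derangement of a proper induced subgraph and contradicting the vanishing of those summands. Hence $G$ is exactly the directed cycle on $V$, completing the equality characterization.
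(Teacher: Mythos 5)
There is a genuine gap, and it is structural: your derivation of the inequality from Theorem \ref{th:matchingInBipartiteGraph} is circular within this paper. The bipartite-doubling construction you describe (vertices $a_v, b_v$, edges $a_ub_v$ for $u=v$ or $(u,v)\in E$, diagonal matching $M_0$) is precisely the equivalence the authors set up in the introduction --- but they use it in the \emph{opposite} direction: Theorem \ref{th:matchingInBipartiteGraph} has no independent proof in the paper and is itself obtained as a corollary of Theorem \ref{thm:ratioHalf}, whose actual proof is the content of Section \ref{sec:arbitrary}. That proof constructs an explicit injection from derangements to non-derangement permutations by breaking the cycle through a chosen vertex $v$, with all the work concentrated in Lemma \ref{th:HamiltonMap} (forward chords, the first minimal forward chord, and the recovery of the cyclic order of the deleted vertices). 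By citing Theorem \ref{th:matchingInBipartiteGraph} you have assumed exactly the statement you are asked to prove, just in its matching formulation.

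Your treatment of the equality case is also incomplete at its crucial step. The reformulation $d(G) = 1 + \sum_{\emptyset \neq F \subsetneq V} d(G[V\setminus F])$ is correct, as is the identity $\sum_{\pi}(2^{k(\pi)}-2) = \sum_{\emptyset\neq U\subsetneq V} d(G[U])\,d(G[V\setminus U])$, and step (iii) (a chord of the Hamilton cycle creates a derangement of a proper induced subgraph) does work once (i) and (ii) are known. But the claim that the identity ``is incompatible with the tight count unless every derangement is a Hamilton cycle and $d(G)=1$'' is asserted, not proved, and it is exactly where the difficulty lives: the identity involves the products $d(G[U])\,d(G[V\setminus U])$ while the tight count involves the sum $\sum_U d(G[U])$, and distinct derangements can restrict to identical sub-derangements (one cannot recover the cyclic order of a deleted cycle from its vertex set), so no straightforward count relates the two. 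Indeed, showing that $d(G)\geq 2$ forces $\sum_{\emptyset\neq U\subsetneq V} d(G[U]) \geq d(G)$ amounts to showing $p_G \geq 2d_G+1$ for non-cycles, i.e.\ the strict inequality itself; this is the injectivity problem that the paper's forward-chord machinery is built to solve, and your sketch does not supply a substitute for it.
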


Let us see that the claim that $(d/p)_G \leq 1/2$ in Theorem \ref{thm:ratioHalf} is equivalent to Theorem \ref{th:matchingInBipartiteGraph}.
Given a directed graph $G$, we construct a bipartite graph $G'$ as follows.
For each vertex $v \in V(G)$, we have a left vertex $v_l$ and a right vertex $v_r$ in $V(G')$.
We have $\{u_l, v_r\} \in E(G')$ if and only if either $(u,v) \in E(G)$ or $u =v$.
Then, each permutation on $G$ corresponds to a perfect matching in $G'$, and a derangement on $G$ corresponds to a perfect matching in $G'$ that does not use any edge of the form $\{v_l, v_r\}$.
Hence, Theorem \ref{thm:ratioHalf} is equivalent to the claim that the matching $\{ \{v_l, v_r\} : v \in V(G)\}$ in $G'$ intersects at least half of the perfect matchings in $G'$.
We can obtain the full statement of Theorem \ref{th:matchingInBipartiteGraph} by applying a permutation to the labeling of the right vertices of $G'$.

The formulation in terms of derangements and permutations on graphs leads to several other natural questions.
In particular, we investigate the ratio $(d/p)$ for restricted families of graphs.

In the case that $G$ is regular and very dense, it turns out that $(d/p)_G$ is always close to $1/e$, as it is for complete graphs.

\begin{theorem}\label{thm:veryDense}
	Let $\mathcal{G} = \{G_2, G_3, \ldots\}$ be an infinite family of directed graphs, so that $G_n$ is $k_n$-regular on $n$ vertices.
	Suppose that 
	$k_n = n - o(n/\log(n))$.
	Then,
	$$\lim_{n \rightarrow \infty} (d/p)_{G_n} = 1/e.$$
\end{theorem}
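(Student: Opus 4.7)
The plan is to show that, for $\sigma$ chosen uniformly from the permutations of $G_n$, the number of fixed points $L = |\{v \in V(G_n) : \sigma(v) = v\}|$ converges in distribution to $\mathrm{Poisson}(1)$ as $n \to \infty$; since $\sigma$ is a derangement of $G_n$ exactly when $L = 0$, we have $D_n/P_n = \Pr[L = 0]$, and so this will give $D_n/P_n \to 1/e$.

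First I would write down the $k$-th factorial moment of $L$. By direct counting, fixing $k$ distinct vertices forces $\sigma$ to restrict to a permutation on the induced subgraph over the remaining vertices, so
$$E[L(L-1)\cdots(L-k+1)] \;=\; k!\sum_{|S|=k}\frac{P(G_n[V\setminus S])}{P_n},$$
where $P(H)$ denotes the number of permutations of $H$. By the method of moments, once we show $E[L^{(k)}] \to 1$ for each fixed $k$ and secure a uniform tail bound $E\bigl[\binom{L}{k}\bigr] \le C^k/k!$ with $\sum_k C^k/k! < \infty$, we can pass to the limit termwise in the identity $\Pr[L = 0] = \sum_k (-1)^k E\bigl[\binom{L}{k}\bigr]$ and conclude $\Pr[L=0] \to \sum_k (-1)^k/k! = 1/e$.

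The main technical task is then to show
$$\frac{P(G_n[V \setminus S])}{P_n} \;=\; \frac{1 + o(1)}{(k_n+1)^k}$$
uniformly over $S$ of each fixed size $k$; summing over the $\binom{n}{k}$ such sets and using $(n/(k_n+1))^k \to 1$ then yields $E[L^{(k)}] \to 1$. I would obtain this from two classical permanent inequalities. The Brégman--Minc inequality gives $\mathrm{perm}(M) \le \prod_u (r_u!)^{1/r_u}$ for any $0/1$ matrix with row sums $r_u$, and the Egorychev--Falikman theorem (the van der Waerden conjecture) gives a matching lower bound for doubly stochastic $M$. Writing $m_n = n - k_n$, a Stirling calculation shows that these two bounds applied to the $(k_n+1)$-regular $0/1$ matrix $A + I$ agree up to a factor $\exp(O(m_n \log n / n))$, which tends to $1$ precisely because of the hypothesis $m_n = o(n/\log n)$, so $P_n = (1+o(1)) \cdot n!((k_n+1)/n)^n$. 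The submatrix $(A+I)[V\setminus S]$ has row and column sums in $[k_n+1-k, k_n+1]$ with total deficit from the regular value equal to the number of directed edges of $G_n$ between $V\setminus S$ and $S$, which is $kk_n - O(k^2)$. Tracking this deficit through Brégman's product yields, after Stirling, an extra multiplicative factor of $e^{-k}(1+o(1))$ compared to the regular case; a matching lower bound follows from Sinkhorn-rescaling $(A+I)[V\setminus S]$ to a doubly stochastic matrix and applying Egorychev--Falikman there.

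The main obstacle I expect is making the lower bound quantitative enough: the Egorychev--Falikman theorem applies only to doubly stochastic matrices, so one must verify that the Sinkhorn rescaling factors for the nearly-regular $(A + I)[V \setminus S]$ are all of the form $(1+o(1)) \cdot 1/(k_n+1)$ with sufficient precision, or alternatively replace this step by the combinatorial device of finding a $(k_n+1-k)$-regular spanning subgraph via a Hall- or König-type argument and applying van der Waerden there. With that technical step in place, method of moments gives $L \to \mathrm{Poisson}(1)$, and hence $D_n/P_n \to 1/e$.
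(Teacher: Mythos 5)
Your overall strategy --- showing that the number of fixed points of a uniformly random permutation on $G_n$ is asymptotically $\mathrm{Poisson}(1)$ via the method of moments --- is a genuinely different route from the paper's, and your reduction of the factorial moments to the ratios $\per\bigl((A+I)[V\setminus S]\bigr)/\per(A+I)$ is correct, as is the Br\'egman/Stirling computation giving the upper bound $(1+o(1))(k_n+1)^{-k}$. But the argument has a genuine gap exactly where you flag it: the matching \emph{lower} bound on $\per\bigl((A+I)[V\setminus S]\bigr)$. Egorychev--Falikman applies only to doubly stochastic matrices, the submatrix is merely nearly regular, and neither proposed fix is carried out. The Sinkhorn route requires showing the scaling factors multiply to at most $(1+o(1))^{n}(k_n+1)^{-(n-k)}$, which is not an off-the-shelf estimate; the regular-spanning-subgraph route needs a Hall/max-flow verification that genuinely uses density (a bipartite graph with degrees in $[\delta,\Delta]$ need not contain a $\delta$-regular spanning subgraph in general). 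Since the identity $\Pr[L=0]=\sum_k(-1)^kE\bigl[\binom{L}{k}\bigr]$ has alternating signs, one-sided moment estimates do not suffice, so this lower bound is essential and the proof does not close without it.

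The paper avoids the issue entirely: since $(d/p)_{G_n}=\per(A)/\per(A+I)$ and both $A$ and $A+I$ are \emph{exactly} regular (row and column sums $k_n$ and $k_n+1$), it bounds the numerator above by Br\'egman and the denominator below by van der Waerden, then swaps the roles for the reverse inequality; a single Stirling computation shows both bounds on the ratio tend to $1/e$ under $k_n=n-o(n/\log n)$. No submatrices, no fixed-point decomposition, and no scaling of non-regular matrices are needed. If you want to complete your version, the cleanest repair is to exhibit a doubly stochastic matrix supported on $(A+I)[V\setminus S]$ with all entries at most $(1+o(1))/(k_n+1)$ (a max-flow argument using the density hypothesis) and apply van der Waerden to that; but be aware that the theorem follows from the same two permanent inequalities with far less machinery.
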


By Theorem \ref{thm:ratioHalf}, $(d/p) = 1/2$ is attained only for directed cycles.
A natural question is whether it is possible for $(d/p)$ to be nearly $1/2$ for dense graphs.
It turns out that there are graphs having positive edge density and $(d/p)$ arbitrarily close to $1/2$.
\begin{theorem}\label{thm:blowupExample}
	For any $\eps > 0$, there is a constant $c_\eps > 0$, an infinite set $I$ of positive integers, and a family of graphs $\{G_n : n \in I\}$ such that $|V(G_n)| = n$, $|E(G_n)| \geq c_\eps n^2$, and $\lim_{n \rightarrow \infty} (d/p)_{G_n}$ exists and is strictly larger than $1/2 - \eps$.
\end{theorem}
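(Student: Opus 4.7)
The plan is to take blowups of a long directed cycle. For integers $k \ge 2$ and $t \ge 1$, let $G_{k,t}$ be the directed graph with vertex set $V_0 \sqcup V_1 \sqcup \cdots \sqcup V_{k-1}$, each $|V_i| = t$, and every possible directed edge from $V_i$ to $V_{i+1}$ (indices mod $k$). Thus $G_{k,t}$ has $n := kt$ vertices and $kt^2 = n^2/k$ edges, which is $\Theta(n^2)$ for any fixed $k$. The plan is to compute $(d/p)_{G_{k,t}}$ exactly, then send $t \to \infty$, then send $k \to \infty$.

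The core structural observation is that for any permutation $\sigma$ of $G_{k,t}$, the non-fixed parts $D_i := V_i \setminus \fix(\sigma)$ all have the same size. Indeed, every $v \in D_i$ must map to an out-neighbor, hence $\sigma(v) \in V_{i+1}$; and $\sigma(v) \notin \fix(\sigma)$ by injectivity, so $\sigma(D_i) \subseteq D_{i+1}$. Applying the same argument to $\sigma^{-1}$ gives $\sigma^{-1}(D_{i+1}) \subseteq D_i$, so $\sigma|_{D_i}\colon D_i \to D_{i+1}$ is a bijection and all $|D_i|$ equal a common value $d$. Conversely, since every edge from $V_i$ to $V_{i+1}$ is present, any choice of subsets $D_i \subseteq V_i$ of common size $d$ together with arbitrary bijections $D_i \to D_{i+1}$ yields a valid permutation. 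This gives
\[
\#\{\text{permutations of } G_{k,t}\} \;=\; \sum_{d=0}^{t} \binom{t}{d}^{k} (d!)^{k} \;=\; (t!)^{k} \sum_{e=0}^{t} \frac{1}{(e!)^{k}},
\]
where the last equality uses the substitution $e = t-d$. Derangements are exactly the $d=t$ term, contributing $(t!)^k$, so
\[
(d/p)_{G_{k,t}} \;=\; \frac{1}{\sum_{e=0}^{t} (e!)^{-k}}.
\]

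To finish, fix $k$ and let $t \to \infty$: the ratio tends to $1/L_k$ where $L_k := \sum_{e \ge 0} (e!)^{-k}$. As $k \to \infty$, each term with $e \ge 2$ tends to $0$ and is dominated (for $k \ge 2$) by $(e!)^{-2}$, so dominated convergence gives $L_k \to 1 + 1 = 2$. Given $\eps > 0$, choose $k$ with $1/L_k > 1/2 - \eps$, and take $I = \{k, 2k, 3k, \ldots\}$, $G_n = G_{k, n/k}$, and $c_\eps = 1/k$. The main conceptual step is recognizing the right construction and proving the equal-size reduction for the $D_i$; once this is in hand the permanent factors cleanly into $(t!)^k$ times a nice series, and I do not foresee further obstacles.
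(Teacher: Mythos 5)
Your proposal is correct and is essentially the paper's own argument: the paper uses the same blowup of a directed cycle (its $D_{k,l}$ is your $G_{l,k}$ up to swapping the roles of the two parameters), derives the identical formula $(d/p) = \bigl(\sum_{e=0}^{t}(e!)^{-k}\bigr)^{-1}$ in Proposition \ref{thm:construction}, and then fixes the cycle length large and lets the part size grow. Your write-up just spells out the equal-size-of-non-fixed-parts observation and the limiting step in more detail than the paper does.
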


We are also interested in how large $(d/p)_G$ can be for an undirected graph $G$.
In Section \ref{sec:constructions}, we show that $(d/p)_{K_{n,n}} > (d/p)_{K_{2n}}$.
However, in contrast to the case for directed graphs, this is essentially the only example we have found of an undirected graph $G$ with $(d/p)_G > (d/p)_{K_n}$.
We propose the following conjecture on the matter.

\begin{conjecture}\label{conj:undirected}
	Let $G$ be an undirected graph on $n$ vertices.
	If $n$ is even, then
	$$(d/p)_G \leq (d/p)_{K_{n/2,n/2}},$$
	with equality only for $K_{n/2,n/2}$.
\end{conjecture}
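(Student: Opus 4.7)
The plan is to attack the conjecture by first proving it for bipartite $G$ via a Cauchy--Schwarz argument, and then to seek a reduction from the general case to the bipartite one. It is cleanest to work with the permanent formulation $(d/p)_G = \operatorname{perm}(A_G)/\operatorname{perm}(A_G+I)$; expanding the diagonal entries of $A_G+I$ in the second permanent yields the standard identity
$$\operatorname{Perm}(G) = \sum_{T \subseteq V(G)} \operatorname{Der}(G[T]),$$
with the convention $\operatorname{Der}(\emptyset) = 1$. A direct computation gives $\operatorname{Perm}(K_{m,m})/\operatorname{Der}(K_{m,m}) = \sum_{s=0}^{m} 1/(s!)^2$ for $m = n/2$, so the conjecture is equivalent to the inequality $\operatorname{Perm}(G)/\operatorname{Der}(G) \geq \sum_{s=0}^{m} 1/(s!)^2$ for every undirected graph $G$ on $n = 2m$ vertices.

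For bipartite $G$ with parts $A, B$ of size $m$, any derangement sends $A$ bijectively to $B$ and vice versa, so $\operatorname{Der}(G[T]) = 0$ unless $|T \cap A| = |T \cap B|$, in which case $\operatorname{Der}(G[T]) = \operatorname{pm}(G[T])^2$. Setting $h_t(G) := \sum_{|S_A|=|S_B|=t} \operatorname{pm}(G[S_A \cup S_B])^2$, one has $\operatorname{Der}(G) = h_m(G)$ and $\operatorname{Perm}(G) = \sum_{t=0}^{m} h_t(G)$. The crux is the double-counting identity
$$\operatorname{pm}(G) \binom{m}{t} = \sum_{|S_A|=|S_B|=t} \operatorname{pm}(G[S_A \cup S_B]) \cdot \operatorname{pm}(G - S_A - S_B),$$
obtained by counting pairs $(M, M')$ with $M$ a perfect matching of $G$ and $M' \subseteq M$ of size $t$ in two different ways. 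Cauchy--Schwarz, combined with the change of variables $(S_A, S_B) \mapsto (A \setminus S_A, B \setminus S_B)$ that turns $\sum \operatorname{pm}(G - S_A - S_B)^2$ into $h_{m-t}(G)$, gives $\operatorname{pm}(G)^2 \binom{m}{t}^2 \leq h_t(G) \cdot h_{m-t}(G)$, while the trivial bound $\operatorname{pm}(G[S_A \cup S_B]) \leq (m-t)!$ yields $h_{m-t}(G) \leq \binom{m}{t}^2 ((m-t)!)^2$. Combining these gives $h_t(G) \geq \operatorname{Der}(G)/((m-t)!)^2$, and summing over $t$ proves the conjecture in the bipartite case. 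Tightness in the bound for $t = m-1$ forces every pair $(a,b) \in A \times B$ to be an edge, i.e.\ $G = K_{m,m}$.

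The main obstacle is the non-bipartite case. The most optimistic approach would be to find, for each graph $G$ on $n$ vertices, a bipartite $H$ on $n$ vertices with $(d/p)_H \geq (d/p)_G$, reducing at once to the bipartite result; however, edge removal moves $(d/p)$ in either direction, so no naive monotonicity achieves this. A direct attempt to imitate the Cauchy--Schwarz proof runs into the fact that odd cycles make $\operatorname{Der}(G[T])$ nonzero on subsets $T$ imbalanced relative to any chosen bipartition of $V(G)$, destroying the clean $S \leftrightarrow V \setminus S$ duality that drove the bipartite argument. A plausible starting point is to verify the conjecture on graphs obtained from $K_{m,m}$ by adding a small number of within-part edges: these sit on the boundary between the bipartite and general regimes and should indicate the correction term, perhaps weighted by odd-cycle counts, needed to close the gap.
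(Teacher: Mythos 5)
There is a genuine gap, and it is the one you identify yourself: the statement you were asked to prove is Conjecture~\ref{conj:undirected}, which covers \emph{all} undirected graphs on an even number of vertices, and your argument only handles the bipartite case. The paper does not contain a proof of this statement either --- it is explicitly left as a conjecture, with the bipartite case (Theorem~\ref{thm:undirectedBipartite}) and the random-graph behaviour of Proposition~\ref{th:randomRatios} offered as evidence. So your proposal should be read as a correct proof of Theorem~\ref{thm:undirectedBipartite} together with an accurate diagnosis of why the general case is hard, not as a proof of the conjecture; the non-bipartite case remains open, and neither your suggested reduction (finding a bipartite $H$ with $(d/p)_H \geq (d/p)_G$) nor a direct imitation of the Cauchy--Schwarz argument is carried out in the paper.

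On the bipartite portion, your route is essentially identical to the paper's. Your double-counting identity $\binom{m}{t}\cdot\mathrm{pm}(G) = \sum \mathrm{pm}(G[S_A \cup S_B])\,\mathrm{pm}(G - S_A - S_B)$ is Lemma~\ref{thm:subpermanentFormula} applied to the biadjacency matrix; your expansion $\per(A_G+I) = \sum_{T}\operatorname{Der}(G[T])$ with $\operatorname{Der}(G[T]) = \mathrm{pm}(G[T])^2$ on balanced induced subgraphs is Lemma~\ref{thm:permutationSum}; and your combination of the two-sequence Cauchy--Schwarz inequality with the trivial bound $\mathrm{pm}(\cdot) \leq (m-t)!$ produces exactly the paper's estimate $h_t(G) \geq \operatorname{Der}(G)/((m-t)!)^2$ (the paper applies the trivial bound to one factor first and then the degenerate Cauchy--Schwarz $(\sum b)^2 \leq N\sum b^2$; the two orderings are interchangeable). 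Your equality analysis via the $t = m-1$ terms matches the paper's observation that $\per(M(S,S')) = k!$ for all $S,S'$ forces $G = K_{m,m}$. One small caution on that last point: to conclude ``equality only for $K_{m,m}$'' you should also note that equality in the final ratio requires $d_G \neq 0$ and tightness simultaneously for every $t$, which your one-line tightness remark elides but which is easily supplied.
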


In the case that $n$ is odd, we don't know of any graphs $G$ on $n$ vertices with $(d/p)_G > (d/p)_{K_n}$; in particular, we don't know of any way to add a vertex to $K_{n,n}$ without substantially decreasing the ratio $d/p$.

The following theorem provides some evidence for Conjecture \ref{conj:undirected}.

\begin{restatable}{theorem}{undirectedBipartite}\label{thm:undirectedBipartite}
	Let $G$ be a bipartite graph.
	Then,
	$$(d/p)_G \leq \frac{1}{\sum_{k=0}^{n} k!^{-2}},$$
	with equality if and only if $G$ is a complete bipartite graph.
\end{restatable}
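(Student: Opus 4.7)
The plan is to recast the problem in terms of permanents of submatrices of the bipartite adjacency matrix, and then obtain the bound from a single Cauchy--Schwarz step combined with a pointwise binomial inequality.

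If the two sides of $G$ have different sizes, then $G$ admits no derangement and the inequality is trivial; otherwise let $n$ denote their common size and let $Q$ be the $n\times n$ bipartite adjacency matrix of $G$. Since $G$ is bipartite, any derangement exchanges the two sides of $G$ and therefore corresponds to an ordered pair of perfect matchings of $G$. More generally, a permutation of $G$ with non-fixed parts $(S,T)$ (necessarily $|S|=|T|=k$) corresponds to an ordered pair of perfect matchings of $G[S\cup T]$, which gives
\[
d_G=\per(Q)^2,\qquad p_G=\sum_{k=0}^{n}\sum_{|S|=|T|=k}\per(Q[S,T])^2.
\]
It therefore suffices to prove the term-by-term bound
\[
\sum_{|S|=|T|=k}\per(Q[S,T])^2\ \ge\ \frac{\per(Q)^2}{((n-k)!)^2}\quad\text{for each }0\le k\le n,
\]
since summing and substituting $j=n-k$ yields $p_G\ge d_G\sum_{k=0}^{n}(k!)^{-2}$.

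For the term-by-term bound I would apply Cauchy--Schwarz over the $\binom{n}{k}^2$ pairs $(S,T)$ to obtain
\[
\sum_{|S|=|T|=k}\per(Q[S,T])^2\ \ge\ \binom{n}{k}^{-2}\Bigl(\sum_{|S|=|T|=k}\per(Q[S,T])\Bigr)^{\!2},
\]
reducing matters to $(n-k)!\sum_{|S|=|T|=k}\per(Q[S,T])\ge\binom{n}{k}\per(Q)$. For this, I would use the combinatorial identity
\[
(n-k)!\sum_{|S|=|T|=k}\per(Q[S,T])\;=\;\sum_{\sigma\in S_n}\binom{m(\sigma)}{k},\quad m(\sigma):=\#\{i:Q_{i,\sigma(i)}=1\},
\]
obtained by observing that both sides count pairs $(\sigma,U)$ with $\sigma\in S_n$, $|U|=k$, and $\sigma|_U$ a partial $Q$-matching: the right-hand side is this count via $U\subseteq\{i:Q_{i,\sigma(i)}=1\}$, and the left-hand side is the same count via choosing $S=U$, $T=\sigma(U)$, the partial matching $\sigma|_U$, and an arbitrary extension $\sigma|_{[n]\setminus U}$ (contributing the $(n-k)!$ factor). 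Because $Q$ is $0/1$, the pointwise bound $\binom{m(\sigma)}{k}\ge\binom{n}{k}\mathbf{1}[m(\sigma)=n]$ is immediate, and summing yields the linear bound using $\per(Q)=\#\{\sigma:m(\sigma)=n\}$.

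For equality, Cauchy--Schwarz requires $\per(Q[S,T])$ constant over all $|S|=|T|=k$ for every $k$; taking $k=1$ already forces all entries of $Q$ to be equal, and since the all-zeros case makes $d_G=0$, we must have $Q=J$, i.e.\ $G=K_{n,n}$, and direct computation confirms $(d/p)_{K_{n,n}}=1/\sum_k(k!)^{-2}$. I expect the main step requiring care to be setting up the combinatorial identity $(n-k)!\sum\per(Q[S,T])=\sum_\sigma\binom{m(\sigma)}{k}$; once it is in hand, both Cauchy--Schwarz and the pointwise binomial bound are immediate, and the chain closes with the correct constant on the nose.
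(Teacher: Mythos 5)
Your proposal is correct and follows essentially the same route as the paper: the identities $d_G=\per(Q)^2$ and $p_G=\sum_{S,T}\per(Q[S,T])^2$, the Cauchy--Schwarz step over the $\binom{n}{k}^2$ pairs $(S,T)$, and the resulting linear inequality $(n-k)!\sum_{|S|=|T|=k}\per(Q[S,T])\ge\binom{n}{k}\per(Q)$ all appear in the paper's argument, with the same equality analysis. The only (minor) difference is how that linear inequality is obtained --- the paper uses the exact Laplace-type expansion $\binom{n}{k}\per(Q)=\sum_{S,T}\per(Q[S,T])\per(Q[\overline{S},\overline{T}])$ together with the bound $\per\le k!$ on the complementary block, whereas you double-count pairs $(\sigma,U)$ via the identity $\sum_\sigma\binom{m(\sigma)}{k}=(n-k)!\sum_{S,T}\per(Q[S,T])$ --- and both derivations are valid.
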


In the case of random graphs and digraphs, we show that the numbers of derangements and permutations are tightly concentrated about their means.  This can be derived either from the arguments of Frieze and Jerrum \cite{frieze1995analysis} on the permanents of random matrices or from the more general framework of Janson \cite{janson1994numbers} who proved limit distribution laws for a wide range of subgraph counts.

For a probability $q$, we use $G_{n,q}$ (resp.\ $DG_{n,q}$) to denote the Erd\H{o}s--R\'enyi random graph (resp.\ digraph) on $n$ vertices where each edge (resp.\ arc) is included independently with probability $q$.  It turns out that a great deal can be said about the ratio $(d/p)$ for these random structures (following \cite{janson1994numbers}); however, for both brevity and accessibility we'll restrict ourselves to the following.

\begin{proposition}\label{th:randomRatios}
For each $\delta > 0$, there is a sequence $\varepsilon_n \to 0$ satisfying the following.  If $q = q_n \in (\delta, 1]$, then with probability tending to $1$ as $n \to \infty$,
\begin{eqnarray*}
e^{-1/q} (1 - \varepsilon_n) \leq &(d/p)_{G_{n,q}}& \leq e^{-1/q} (1 + \varepsilon_n), \qquad \text{and}\\
e^{-1/q} (1 - \varepsilon_n) \leq &(d/p)_{DG_{n,q}}& \leq e^{-1/q} (1 + \varepsilon_n).
\end{eqnarray*}
\end{proposition}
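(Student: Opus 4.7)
The plan is to compute the expectations of the derangement and permutation counts exactly, show that their ratio tends to $e^{-1/q}$, and then establish concentration via a second-moment calculation.

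For $DG_{n,q}$, a fixed derangement $\sigma$ of $[n]$ requires exactly $n$ arcs, each independently present with probability $q$, so the expected number of derangements equals $D_n q^n$, where $D_n$ is the usual derangement number. A permutation with $k$ fixed points requires $n-k$ arcs, and there are $\binom{n}{k}D_{n-k}$ such permutations, so the expected number of permutations is
$$\sum_{k=0}^n \binom{n}{k} D_{n-k}\, q^{n-k} \;=\; n!\sum_{j=0}^n \frac{D_j/j!}{(n-j)!}\, q^j.$$
Using $D_j/j! = 1/e + o(1)$ together with $\sum_{i\ge 0} q^{-i}/i! = e^{1/q}$, this sum equals $(n!/e)\,q^n\, e^{1/q}(1+o(1))$, where the $o(1)$ is uniform in $q\in(\delta,1]$. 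Hence the ratio of expectations tends to $e^{-1/q}$ uniformly. For the undirected model $G_{n,q}$ the expectations are slightly different because a derangement $\sigma$ with $c_2(\sigma)$ two-cycles uses only $n-c_2(\sigma)$ distinct undirected edges. Under the uniform measure on either permutations or derangements, the number of $2$-cycles is asymptotically Poisson with mean $1/2$, so both expectations pick up a common factor $\mathbb{E}[q^{-c_2}] = e^{(1-q)/(2q)}(1+o(1))$, which cancels in the ratio, leaving the same asymptotic $e^{-1/q}$.

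Next I would prove concentration. Writing the derangement count as $\sum_\sigma X_\sigma$ with $X_\sigma$ the indicator that $\sigma$ is valid, one has $\mathbb{E}[X_\sigma X_\tau] = q^{2n-|\sigma\cap\tau|}$, so
$$\mathrm{Var}\Bigl(\sum_\sigma X_\sigma\Bigr) \;=\; q^{2n}\sum_{\sigma,\tau}\bigl(q^{-|\sigma\cap\tau|}-1\bigr).$$
Grouping pairs of derangements by the number $k=|\sigma\cap\tau|$ of shared arcs and bounding the count of derangements extending a fixed set of $k$ arcs by $(n-k)!$, this sum is dominated by its $k=O(1)$ contributions and yields $\mathrm{Var}/\mathbb{E}^2 = o(1)$, uniformly in $q\in(\delta,1]$; this is essentially the Frieze--Jerrum computation for permanents of random $0$/$1$ matrices. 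The same bound controls the permutation count, and Chebyshev's inequality then gives that each count lies within a factor $1\pm\eps_n$ of its mean with probability $1-o(1)$, which propagates to the desired concentration of the ratio around $e^{-1/q}$.

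The main technical obstacle is keeping the error terms uniform in $q\in(\delta,1]$: a careful accounting of pairs of permutations with many shared arcs, and of the Poisson correction in the undirected case, is needed to avoid hidden $q$-dependent factors in the variance bound. Alternatively, one can sidestep the direct variance calculation entirely by invoking Janson's subgraph count limit theorems \cite{janson1994numbers}, which apply essentially verbatim to both random variables here.
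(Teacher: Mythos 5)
Your expectation computations are essentially right and match the paper's in spirit, but the concentration step contains a genuine error that the paper's proof is specifically structured to avoid. In the binomial model $DG_{n,q}$, writing $X=\sum_\sigma X_\sigma$ for the derangement count, you have $\mathbb{E}[X^2]/\mathbb{E}[X]^2=\mathbb{E}\bigl[q^{-|\sigma\cap\tau|}\bigr]$ where $\sigma,\tau$ are independent uniform derangements. The number of agreements $|\sigma\cap\tau|$ is asymptotically Poisson with mean $1$, so this ratio tends to $e^{1/q-1}>1$ for every fixed $q<1$, and hence $\mathrm{Var}(X)/\mathbb{E}[X]^2\to e^{1/q-1}-1$, a positive constant --- not $o(1)$. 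Your sum over $k=|\sigma\cap\tau|$ is indeed dominated by $k=O(1)$, but each such term contributes a constant fraction of $\mathbb{E}[X]^2$ (roughly $e^{-1}(q^{-k}-1)/k!$ of it), so domination by small $k$ does not give you a vanishing normalized variance. The same obstruction is well known for permanents of i.i.d.\ Bernoulli matrices and for counts of perfect matchings and Hamilton cycles in $G_{n,q}$: these spanning-structure counts are \emph{not} concentrated around their means in the binomial model; normalized by their means they converge to non-degenerate (log-normal-type) limits. So Chebyshev applied directly to $X$ and $Y$ in $DG_{n,q}$ cannot close the argument.

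The fix --- and this is exactly what the paper does, following Janson and Frieze--Jerrum --- is to first condition on the number of arcs, i.e.\ work in $DG_{n,m}$. There the containment probability of a $t$-arc subgraph is $f(t)=\binom{n(n-1)-t}{m-t}\binom{n(n-1)}{m}^{-1}$, whose expansion carries an extra factor $\exp\bigl\{-\tfrac{t^2}{2}(\tfrac1m-\tfrac1{n(n-1)})\bigr\}$; this built-in negative correlation cancels the positive correlation from shared arcs, and only then do the second moments satisfy $\mathbb{E}[X^2]=\mathbb{E}[X]^2(1+o(1))$ and $\mathbb{E}[Y^2]=\mathbb{E}[Y]^2(1+o(1))$. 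One then transfers back to $DG_{n,q}$ via the standard coupling, using that the arc count is concentrated so that $\gamma=m/[n(n-1)]=q+o(1)$; the point is that the non-vanishing fluctuations of $X$ and $Y$ in the binomial model are both driven by the edge-count fluctuation and cancel in the ratio $X/Y$. Your closing remark that one could instead invoke Janson's limit theorems is a legitimate escape hatch, but note that it is not quite ``verbatim'': you need the \emph{joint} limit of $(X/\mathbb{E}X,\,Y/\mathbb{E}Y)$ to see that the two log-normal factors coincide, which again comes from the conditioning on the edge count. Your treatment of the undirected case via the $2$-cycle correction $\mathbb{E}[q^{-c_2}]$ is a reasonable account of the expectation, consistent with the paper's remark about approximate independence of fixed points and $2$-cycles, but it inherits the same concentration gap.
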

Thus a large random (di)graph with edge density $q$ will have $(d/p) \sim e^{-1/q}$.  Setting $q=1/2$ shows that for large $n$, the vast majority of graphs and digraphs have $(d/p)$ tending to $e^{-2}$. Taking $q \to 1$, we obtain a randomized analog of Theorem \ref{thm:veryDense}.  Moreover, the behavior for random graphs provides some (mild) additional evidence for Conjecture \ref{conj:undirected}.

\subsection{Prior work}

Derangements and permutations on graphs were introduced by Clark, \cite{clark2013graph}, who considered mainly the cycle structure of graph derangements.

Penrice \cite{penrice1991derangements} (using a slightly different terminology) investigated the number of derangements on $n$-partite graphs with $t$ vertices in each part, for $t$ fixed and $n$ large.
His result is a special case of Theorem \ref{thm:veryDense}, and his proof is based on the same ideas used in the proof of Theorem \ref{thm:veryDense}.

The literature on perfect matchings in bipartite graphs is extensive. For a general background on this, see the textbook of Lov\'asz and Plummer \cite{lovasz2009matching}.

\subsection{Organization of the paper}

The construction for Theorem \ref{thm:blowupExample} is in Section \ref{sec:constructions}.
The complete bipartite graph is a special case of this construction.
Section \ref{sec:veryDense} contains the proof of Theorem \ref{thm:veryDense}.
Section \ref{sec:undirectedBipartite} contains the proof of Theorem \ref{thm:undirectedBipartite}.
Theorem \ref{thm:ratioHalf} is proved in Section \ref{sec:arbitrary}, which also includes the statement and proof of a result on Hamilton cycles in directed graphs (Corollary \ref{thm:hamilton}). In Section \ref{sec:matchingsInGeneralGraphs} we prove Theorem \ref{th:matchingInGeneral}, and we discuss random graphs and digraphs in Section \ref{sec:randomGraphs}.
Some open problems are listed in Section \ref{sec:openProblems}.

\section{Constructions}\label{sec:constructions}

In this section, we compute $(d/p)$ for blowups of directed graphs.
These provide the examples described in Theorem \ref{thm:blowupExample}.
A special case is the complete, balanced, undirected bipartite graph that plays an important role in Conjecture \ref{conj:undirected} and Theorem \ref{thm:undirectedBipartite}.

Let $D_{k,l}$, for $k \geq 1$ and $l \geq 2$, be a graph on vertices $v_{ij}$ for $i \in [k]$ and $j \in [l]$, such that $(v_{ij}, v_{lm}) \in D_{k,l}$ if and only if $m = j+1 \mod l$.
Note that $D_{1,n}$ is the directed cycle on $n$ vertices, and $D_{n,2}$ is the undirected complete bipartite graph on $[n] \sqcup [n]$.
Figure \ref{fig:D25} shows $D_{2,5}$.

\begin{figure}[h]
\begin{center}
\includegraphics[scale=1]{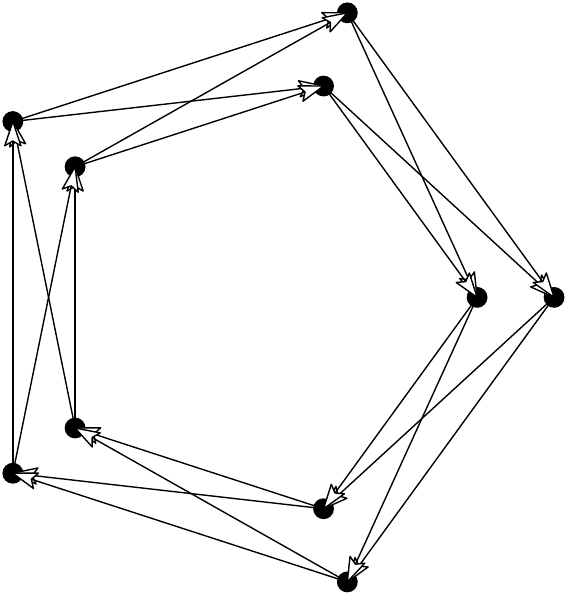}
\end{center}
\caption{$D_{2,5}$} \label{fig:D25}
\end{figure}

\begin{proposition}\label{thm:construction}
The number of derangements on $D_{k,l}$ is $(k!)^l$, and the number of permutations on $D_{k,l}$ is
$$  \sum_{i=0}^k \left( \binom{k}{i} (k-i)! \right)^l.$$
Consequently,
$$ (d/p)_{D_{k,l}} = \left( \sum_{i=0}^k \frac{1}{(i!)^l} \right)^{-1}.$$
\end{proposition}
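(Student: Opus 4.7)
The key structural feature of $D_{k,l}$ is that edges only go from layer $j$ (i.e.\ the vertices $\{v_{1j},\ldots,v_{kj}\}$) to layer $j+1 \bmod l$, with a complete bipartite pattern between consecutive layers. So any bijection $\sigma$ with $(v,\sigma(v))$ always an edge must map layer $j$ bijectively onto layer $j+1$. That immediately gives $k!$ choices per layer, hence $(k!)^l$ derangements in total.

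For permutations, the plan is to stratify by how many vertices are fixed in each layer. Let $i_j$ denote the number of fixed points of $\sigma$ in layer $j$. The $k-i_j$ non-fixed vertices of layer $j$ must map into layer $j+1$, and conversely every non-fixed vertex of layer $j$ must receive its image from layer $j-1$. Counting incoming edges to layer $j$ gives the balance equation $i_j + (k - i_{j-1}) = k$, so $i_j = i_{j-1}$ for all $j$. Hence there is a single value $i \in \{0,1,\ldots,k\}$ equal to the number of fixed points in every layer. For each such $i$, I independently choose in each layer which $i$ vertices are fixed (giving $\binom{k}{i}$ options per layer) and then a bijection between the $k-i$ non-fixed vertices of layer $j$ and those of layer $j+1$ (giving $(k-i)!$ options per layer). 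Multiplying across the $l$ layers and summing over $i$ yields
\[
\sum_{i=0}^k \Bigl(\binom{k}{i}(k-i)!\Bigr)^l,
\]
which is the claimed permutation count.

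The ratio then follows by a short algebraic simplification: since $\binom{k}{i}(k-i)! = k!/i!$, the denominator equals $(k!)^l \sum_{i=0}^k 1/(i!)^l$, and dividing into $(k!)^l$ gives $(d/p)_{D_{k,l}} = \bigl(\sum_{i=0}^k 1/(i!)^l\bigr)^{-1}$. There is no substantive obstacle here; the only step requiring care is the balance argument showing all $i_j$ coincide, which is what makes the permutation count split as a product over layers rather than becoming a more complicated transfer-matrix computation.
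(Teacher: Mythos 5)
Your proof is correct and follows essentially the same route as the paper: count derangements layer by layer, then stratify permutations by the common number of fixed points per layer and reduce to the derangement count on the remainder. The only difference is that you supply the explicit balance argument for why all the $i_j$ coincide, a step the paper simply asserts.
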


\begin{proof}
The graph $D_{k,l}$ is organized into $l$ parts, with $k$ vertices in each part.

In any derangement on $D_{k,l}$, each vertex in part $j$ has a single out-neighbor in part $j+1 \mod l$, and no two vertices have the same out-neighbor.
The claim on the number of derangements follows immediately.

If $P$ is a permutation on $D_{k,l}$, the number of fixed points of $P$ in each part of $D_{k,l}$ must be equal.
Hence, to count the number of permutations with $m$ fixed points, it is enough to count the number of ways to choose $m/l$ fixed points in each part, and multiply by the number of derangements on the remaining elements.
This is the formula given above.
\end{proof}

Since
$e = \lim_{n \rightarrow \infty} \sum_{i=0}^n \frac{1}{i!}$
and $K_{n,n} = D_{n,2}$,
it follows from Proposition \ref{thm:construction} that $\lim_{n \rightarrow \infty} (d/p)_{K_{n,n}} > 1/e$.
For small values of $n$, it is easy to calculate that $(d/p)_{K_{n,n}} > (d/p)_{K_{2n}}$.
Hence, $(d/p)_{K_{n,n}} > (d/p)_{K_{2n}}$ for all $n>1$, consistent with Conjecture \ref{conj:undirected}.

In light of Theorem \ref{thm:ratioHalf}, it is natural to wonder if it is possible for $(d/p)_G$ to be very close to $1/2$ if $G$ is not a directed cycle.
Choosing $l$ to be a large constant shows that this is the case.
In particular, Theorem \ref{thm:blowupExample} follows from Proposition \ref{thm:construction} by taking $G_n = D_{n/l,l}$ for a suitably chosen constant $l$ depending on $\eps$.

\section{Very dense graphs}\label{sec:veryDense}

In this section, we prove Theorem \ref{thm:veryDense}.
The proof is based on two standard bounds on the permanents of matrices, the Minc-Br\'egman inequality (proved by Br\'egman in \cite{bregman1973some}) and van der Waerden conjecture (proved independently by Egorychev in \cite{egorychev1981solution} and by Falikman in \cite{falikman1981proof}).

Recall that the permanent of an $n \times n$ matrix $A = [a_{ij}]$ is
$$\per(A) = \sum_\sigma a_{1 \sigma(1)} a_{2 \sigma(2)} \ldots a_{n \sigma(n)},$$
where the sum runs over all permutations of $[n]$.
If $A$ is the adjacency matrix of a  directed graph $G$, and $\sigma$ is a fixed permutation, then
$\prod_i a_{i \sigma(i)} = 1$
if and only if $(i,\sigma(i)) \in G$ for each $i \in [n]$, and $\prod_i a_{i \sigma(i)} =0$ otherwise.
Hence, $\prod_i a_{i \sigma(i)} = 1$ if and only if $\sigma$ is a derangement on $G$, and so the permanent of $A$ is equal to the number of derangements on $G$.
Similarly, the permanent of $A+I$, where $I$ is the identity matrix, is the number of permutations on $G$.

Using the connection between derangements and permanents, Theorem \ref{thm:veryDense} follows as an immediate corollary to the following, stronger, theorem.

\begin{theorem}\label{thm:denseGraphsPermanentVersion}
Let $I$ be an infinite set of positive integers, and let $\{A_n: n \in I\}$ and $\{B_n: n \in I\}$ be families of $n \times n$ matrices with entries in $\{0,1\}$.
Suppose that the sum of entries in each row and each column of $A_n$ is $k_n$, and that the sum of entries in each row and each column of $B_n$ is $k_{n}+1$.
Further, suppose that
$k_n = n - o(n/\log(n))$.
Then,
$$\lim_{n \rightarrow \infty} \frac{Per(A_n)}{Per(B_n)} = 1/e.$$
\end{theorem}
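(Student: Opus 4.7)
The plan is to sandwich $\per(A_n)$ (and $\per(B_n)$) between the van der Waerden lower bound and the Minc-Br\'egman upper bound and show that, under the regularity hypothesis $k_n = n - o(n/\log n)$, these two bounds coincide to within a factor of $1+o(1)$. Since $A_n$ is a nonnegative $k_n$-regular $0/1$ matrix, $A_n/k_n$ is doubly stochastic, so van der Waerden gives
\[
\per(A_n) \geq k_n^n \cdot \frac{n!}{n^n},
\]
while Minc-Br\'egman gives $\per(A_n) \leq (k_n!)^{n/k_n}$; the analogous bounds hold for $\per(B_n)$ with $k_n$ replaced by $k_n+1$.

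Next I would show, via Stirling's formula, that the upper and lower bounds match asymptotically. Writing $\log m! = m\log m - m + \tfrac{1}{2}\log(2\pi m) + O(1/m)$ and computing the log of the ratio of the two bounds, everything except a main error term of the form $\tfrac{n-k_n}{k_n}\log(2\pi k_n)$ cancels (up to $O(\log n)$ contributions divided by $n$). Here is where the hypothesis enters: $k_n \sim n$ and $n - k_n = o(n/\log n)$ together force $\frac{n-k_n}{k_n}\log(2\pi k_n) \to 0$. Consequently
\[
\per(A_n) = (1+o(1))\, k_n^n \cdot \frac{n!}{n^n}, \qquad \per(B_n) = (1+o(1))\, (k_n+1)^n \cdot \frac{n!}{n^n}.
\]

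Finally, taking the ratio gives
\[
\frac{\per(A_n)}{\per(B_n)} = (1+o(1))\left(\frac{k_n}{k_n+1}\right)^n,
\]
and expanding $n\log(1 - 1/(k_n+1)) = -n/(k_n+1) + O(n/k_n^2)$, we use $n/(k_n+1) \to 1$ (which follows from $k_n/n \to 1$) and $n/k_n^2 = O(1/n) \to 0$ to conclude the exponent tends to $-1$, giving $\per(A_n)/\per(B_n) \to 1/e$.

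The entire argument is short once the two permanent inequalities are in hand; the only genuinely delicate point, and the main obstacle, is verifying that the Stirling correction $\tfrac{n-k_n}{k_n}\log k_n$ from the Br\'egman upper bound vanishes in the limit. It is precisely this term that forces the strength of the hypothesis $k_n = n - o(n/\log n)$: any weaker density assumption would allow the Br\'egman bound to overshoot van der Waerden by an unbounded factor, and the argument would break down.
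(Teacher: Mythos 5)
Your proposal is correct and follows essentially the same route as the paper: both arguments rest on the Minc--Br\'egman upper bound and the van der Waerden (Egorychev--Falikman) lower bound, with the same Stirling computation identifying $(2\pi n)^{(n-k_n)/2k_n}$ as the only non-cancelling correction, which the hypothesis $k_n = n - o(n/\log n)$ kills. The only cosmetic difference is that you pin down each permanent individually as $(1+o(1))\,k_n^n\, n!/n^n$ before taking the ratio, whereas the paper bounds the ratio from above and below by crossing the two inequalities between $A_n$ and $B_n$; the content is identical.
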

\begin{proof}[Proof of Theorem \ref{thm:denseGraphsPermanentVersion}]
We use two standard estimates on the permanent of a matrix.
First, the Minc-Br\'egman inequality \cite{bregman1973some} is that, if $A$ is a $(0,1)$-matrix with row sums $k$, then
$$\per(A) \leq k!^{n/k},$$
with equality attained only by block-diagonal matrices.
Second, the van der Waerden Conjecture, proved independently in \cite{falikman1981proof,egorychev1981solution}, is that, if $A$ is a doubly stochastic matrix, then
$\per(A) \geq n! / n^n$,
with equality attained only by $\frac{1}{n}J$, where $J$ is the all-ones matrix.
An immediate corollary to this is that, for a matrix $A$ with row and column sums equal to $k$, we have $$\per(A) \geq n! (k / n)^n.$$
Notice the upper and lower bounds meet for $k=n$.

What remains is calculation using Stirling's approximation.
In what follows, $f(n) \sim g(n)$ means $\lim_{n \rightarrow \infty} (f(n)/g(n)) = 1$, and we denote $k=k_n$.

First, we show that $\lim_{n \rightarrow \infty} \per(A_n) / \per(B_n) \leq 1/e$.
\begin{align*}
\per(A_n)/ \per(B_n) &\leq \frac{k!^{n/k} n^n}{n!(k+1)^n}, \\
&\sim \frac{\sqrt{2 \pi k}^{n/k}}{\sqrt{2 \pi n}} \frac{ k^n}{ (k+1)^n}, \\
&\leq (2 \pi n)^{(n-k)/2k} \frac{ k^n}{ (k+1)^n}, \\
&= (2 \pi n)^{o(1/\log n)} \frac{k^n}{(k+1)^n}, \\
&\sim 1/e.
\end{align*}
In the last line, we use that $(k/(k+1))^n \sim 1/e$ for any $k = (1-o(1))n$, and $n^{1/\log n}$ is a constant so $(2 \pi n)^{o(1/\log n)} \sim 1$.

Next, we show that $\lim_{n \rightarrow \infty} \per(A_n) / \per(B_n) \geq 1/e$.
\begin{align*}
\per(A_n)/ \per(B_n) &\geq \frac{n!k^n}{(k+1)!^{n/(k+1)} n^n}, \\
&\sim \frac{\sqrt{2 \pi n}}{\sqrt{2 \pi (k+1)}^{n/(k+1)}} \frac{ k^n}{ (k+1)^n}, \\
&\geq \frac{\sqrt{2 \pi n}}{\sqrt{2 \pi n}^{n/(k+1)} - o(\sqrt{n}^{n/(k+1)})}  \frac{ k^n}{ (k+1)^n} \\
&\sim \frac{1}{(2 \pi n)^{o(1/\log n)}} (1/e), \\
&\sim 1/e.
\end{align*}
\end{proof}

\section{Undirected bipartite graphs}\label{sec:undirectedBipartite}

In this section, we prove Theorem \ref{thm:undirectedBipartite}.
As in Section \ref{sec:veryDense}, we depend on the connection between derangements on graphs and permanents of matrices.

In what follows, for any $n \times n$ matrix $M$ and sets $S,S' \subset [n]$, we denote by $M(S,S')$ the sub-matrix of $M$ consisting of rows in $S$ and columns in $S'$, and denote by $M(\overline{S}, \overline{S'})$ the sub-matrix of $M$ with rows in $S$ and columns in $S'$ deleted.
We denote by $M(\overline{i}, \overline{j})$ the sub-matrix of $M$ obtained by deleting only row $i$ and column $j$.

For any graph $G$, we denote by $p_G$ the number of permutations on $G$, and by $d_G$ the number of derangements on $G$.

First, we need the following lemma on the permanent of an arbitrary matrix.

\begin{lemma}\label{thm:subpermanentFormula}
Let $M$ be an $n \times n$ matrix, and let $0 \leq k \leq n$.
Then,
$$\binom{n}{k} \per(M) = \sum_{S, S' \in \binom{[n]}{k}} \per(M(S,S')) \per(M(\overline{S},\overline{S'})).$$
\end{lemma}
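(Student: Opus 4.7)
The plan is to imitate the Laplace expansion of the determinant, but without any sign issues since the permanent is sign-free. The identity will follow from a double counting: we sum over permutations $\sigma \in S_n$ on the left, and on the right we sum over pairs $(S,S')$ together with a pair of bijections $S \to S'$ and $\overline{S} \to \overline{S'}$; the map that concatenates these two bijections into a single permutation of $[n]$ is a bijection, and covers each permutation $\sigma$ with $\binom{n}{k}$ choices of $S$.

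More precisely, first I would fix an arbitrary $S \in \binom{[n]}{k}$ and show the ``one-sided'' identity
\begin{equation*}
\per(M) = \sum_{S' \in \binom{[n]}{k}} \per(M(S,S')) \per(M(\overline{S},\overline{S'})).
\end{equation*}
Starting from $\per(M) = \sum_{\sigma \in S_n} \prod_{i=1}^n M_{i,\sigma(i)}$, I would partition the permutations $\sigma$ of $[n]$ according to the set $S' := \sigma(S) \in \binom{[n]}{k}$. For each such $S'$, a permutation with $\sigma(S) = S'$ is the same data as a pair consisting of a bijection $\tau : S \to S'$ and a bijection $\tau' : \overline{S} \to \overline{S'}$, and the product $\prod_{i=1}^n M_{i,\sigma(i)}$ factors as $\bigl(\prod_{i \in S} M_{i,\tau(i)}\bigr) \bigl(\prod_{i \in \overline{S}} M_{i,\tau'(i)}\bigr)$. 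Summing independently over $\tau$ and $\tau'$ recognises the two factors as $\per(M(S,S'))$ and $\per(M(\overline{S},\overline{S'}))$ respectively.

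Finally I would sum the one-sided identity over all $S \in \binom{[n]}{k}$. The left-hand side picks up a factor of $\binom{n}{k}$, and the right-hand side becomes exactly $\sum_{S,S' \in \binom{[n]}{k}} \per(M(S,S')) \per(M(\overline{S},\overline{S'}))$, which is the claim. There is no genuine obstacle here; the only thing to be careful about is the bookkeeping in the step that factors the product over $i \in [n]$ into the parts indexed by $S$ and $\overline{S}$, and checking that as $\tau, \tau'$ range over all bijections $S \to S'$ and $\overline{S} \to \overline{S'}$, their concatenation ranges exactly once over each $\sigma \in S_n$ with $\sigma(S) = S'$.
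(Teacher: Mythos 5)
Your proof is correct and follows essentially the same route as the paper: fix $S$, partition the permutations $\sigma$ according to $S'=\sigma(S)$, factor each term into a bijection $S\to S'$ and a bijection $\overline{S}\to\overline{S'}$ to obtain the one-sided identity, then sum over $S$. No issues.
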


\begin{proof}
Let $S \in \binom{[n]}{k}$.
For any $S' \in \binom{[n]}{k}$, let $\Sigma_{S'}$ be the set of permutations of $[n]$ such that $\sigma(i) \in S'$ for all $i \in S$.
For each permutation $\sigma$ of $[n]$, there is a unique set $S' \in \binom{[n]}{k}$ such that $\sigma(i) \in S'$ for all $i \in S$, and hence the sets $\Sigma_{S'}$ partition the set of all permutations.

\begin{align*}
\per(M) &= \sum_{\sigma} \prod_i m_{i \sigma(i)}, \\
&= \sum_{S' \in \binom{[n]}{k}} \sum_{\sigma \in \Sigma_{S'}}\prod_{i \in S}m_{i \sigma(i)} \prod_{i \in \overline{S}} m_{i \sigma(i)}.
\end{align*}

Let $\Sigma_{S'}^1$ be the set of bijections between $S$ and $S'$, and let $\Sigma_{S'}^2$ be the set of bijections between $\overline{S}$ and $\overline{S'}$.
Each $\sigma \in \Sigma_{S'}$ corresponds to exactly one pair of functions $\sigma_1 \in \Sigma_{S'}^1$ and $\sigma_2 \in \Sigma_{S'}^2$, so that $\sigma(i) = \sigma_1(i)$ for $i \in S$ and $\sigma(j) = \sigma_2(j)$ for $j \in \overline{S}$.
Hence,

\begin{align*}
\per(M) &= \sum_{S' \in \binom{[n]}{k}}\left( \sum_{\sigma \in \Sigma_{S'}^1}\prod_{i \in S}m_{i \sigma(i)} \right) \left(\sum_{\sigma \in \Sigma_{S'}^2}\prod_{i \in \overline{S}} m_{i \sigma(i)} \right),\\
&= \sum_{S' \in \binom{[n]}{k}} \per(M(S,S')) \per(M(\overline{S}, \overline{S'})).
\end{align*}
\end{proof}

We will need to break the permutations up by their fixed points.

\begin{lemma}\label{thm:permutationSum}
If $G$ is a directed, loopless graph on $[n]$ with adjacency matrix $M$, then
$$p_G = \sum_{S \subseteq [n]} \per(M(S,S)).$$
If $G$ is an undirected bipartite graph on $[n] \sqcup [n]$ with biadjacency matrix $M$, then
$$p_G = \sum_{S, S' \subseteq [n]} \per(M(S,S'))^2.$$
\end{lemma}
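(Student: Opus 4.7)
The plan is to prove both identities by stratifying permutations on $G$ according to their fixed-point sets and, in each stratum, reducing to a count of derangements on an induced subgraph --- which by the correspondence reviewed at the start of Section \ref{sec:veryDense} is itself a permanent.

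For the directed case, each permutation $\sigma$ is determined by its non-fixed-point set $S = \{v : \sigma(v) \neq v\}$ together with its restriction $\sigma|_S$. By definition of permutation on $G$, the map $\sigma|_S$ is a fixed-point-free bijection of $S$ satisfying $(v, \sigma(v)) \in E$ for every $v \in S$, i.e., a derangement on the induced subgraph $G[S]$. Since $G$ is loopless, $G[S]$ has adjacency matrix $M(S,S)$ with zero diagonal, so the number of such $\sigma|_S$ is exactly $\per(M(S,S))$. Summing over all $S \subseteq [n]$ yields the first identity.

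For the bipartite case, let $L$ and $R$ be the two color classes, identified with $[n]$ via the row and column indexing of $M$ respectively. Given a permutation $\sigma$, set $S = \{v \in L : \sigma(v) \neq v\}$ and $S' = \{v \in R : \sigma(v) \neq v\}$. Bipartiteness forces $\sigma$ to send $S$ into $R$ and $S'$ into $L$; combined with the fact that $\sigma$ is the identity on $(L \setminus S) \cup (R \setminus S')$ and a bijection overall, this forces $\sigma$ to map $S$ bijectively onto $S'$ and $S'$ bijectively onto $S$, so in particular $|S| = |S'|$. The restrictions $\sigma|_S : S \to S'$ and $\sigma|_{S'} : S' \to S$ can be chosen independently subject to the edge constraints: the number of valid $\sigma|_S$ is $\per(M(S,S'))$, and substituting $\rho = (\sigma|_{S'})^{-1}$ shows the count for $\sigma|_{S'}$ is also $\per(M(S,S'))$. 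Multiplying and summing over all pairs $(S,S')$ --- with the convention that $\per$ of a non-square matrix is zero so that the $|S| \neq |S'|$ terms drop out --- gives the second identity.

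The argument is essentially bookkeeping, so there is no serious obstacle. The one point that requires care is the bipartite decomposition, where bipartiteness is used twice: first to force non-fixed vertices to be swapped across the partition, and then to let the action on non-fixed vertices factor cleanly into two independent bijections between $S$ and $S'$, yielding the square of a single permanent rather than a more complicated expression.
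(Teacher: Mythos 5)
Your proof is correct and follows the same route as the paper's (much terser) proof: stratify permutations by their fixed-point sets and observe that each stratum is counted by the corresponding permanent, using looplessness to get derangements in the directed case and the forced swap across the bipartition (plus undirectedness) to get the square of a single permanent in the bipartite case. The extra care you take with the $|S|\neq|S'|$ terms and the inverse-bijection substitution is all consistent with what the paper leaves implicit.
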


\begin{proof}
Each term on the right side of the equality counts the number of permutations on $G$ with the fixed points $\overline{S}$.
In the case of a bipartite graph, the fixed points occur in pairs; $\overline{S}$ is the set of fixed points among the left vertices, and $\overline{S'}$ is the set of fixed points among the right vertices.
\end{proof}

We're now ready to prove the theorem.
For convenience, we recall it here.

\undirectedBipartite*

\begin{proof}
Let $M$ be the biadjacency matrix of $G$.
Applying Lemma \ref{thm:subpermanentFormula}, we have, for each $k$ in the range $0 \leq k \leq n$, that
\begin{align*}
d_G &= \per(M)^2,\\
&= \binom{n}{k}^{-2} \left (\sum_{S,S' \in \binom{[n]}{k}} \per(M(S,S')) \per(M(\overline{S},\overline{S'})) \right)^2.\\
\end{align*}

Note that $\per(M(S,S'))\leq k!$, with equality holding for all $S,S'$ if and only if $G=K_{n,n}$.
Hence,
$$d_G \leq k!^2 \binom{n}{k}^{-2} \left (\sum_{S,S' \in \binom{[n]}{k}} \per(M(\overline{S},\overline{S'})) \right)^2.$$
By the Cauchy-Schwarz inequality,
$$\left (\sum_{S,S' \in \binom{[n]}{k}} \per(M(\overline{S},\overline{S'})) \right)^2 \leq \sum_{S,S' \in \binom{[n]}{k}} \per(M(\overline{S},\overline{S'}))^2 \sum_{S,S' \in \binom{[n]}{k}} 1.$$
Note that this inequality is tight if $G=K_{n,n}$.
Substituting back into the expression for $d_G$, we get
\begin{equation}\label{eq:d_G} d_G \leq k!^2 \sum_{S,S' \in \binom{[n]}{k}} \per(M(\overline{S},\overline{S'}))^2.\end{equation}

On the other hand, we have by Lemma \ref{thm:permutationSum} that
\begin{equation}\label{eq:p_G}
p_G = \sum_{k=0}^n \sum_{S, S' \in \binom{[n]}{k}} \per(M(\overline{S}, \overline{S'}))^2. 
\end{equation}

Combining (\ref{eq:d_G}) and (\ref{eq:p_G}), we have
$$\frac{p_G}{d_G} \geq \sum_{k=0}^n k!^{-2},$$
with equality if and only if $G=K_{n,n}$.
\end{proof}

\section{Arbitrary directed graphs}\label{sec:arbitrary}

In this section, we prove Theorem \ref{thm:ratioHalf}.
Let us recall the statement of the theorem.

\ratioHalf*

The basic idea of the proof is to construct an injection from derangements to non-derangement permutations.

The injection that we construct is based on the cycle structure of the derangement.
As a graph, any permutation is the union of directed cycles and isolated vertices.
In particular, if $\pi(u) = w$ in a permutation $\pi$, then $uw$ is an edge in the corresponding graph.
If $\pi(u) = u$, then $u$ is an isolated vertex.
Derangements are those permutations with no isolated vertices.

Given a derangement $D$, the plan is to map $D$ to a permutation that shares all but one of the cycles of $D$.
The cycle $C$ that we break is the one that contains a specified vertex $v$.
The injection depends on the choice of $v$.
The image of $C$ will normally be a smaller cycle that contains $v$, together with at least one fixed point.
In some special cases, the image of $C$ may be a collection of fixed points, with no cycle.
Given the image of $D$, it is easy to recover all of the cycles in $D$, except for $C$.
It is easy to obtain the set of vertices in $C$, but harder to recover the order of these vertices in $C$.

Handling the cycle that contains the special vertex $v$ is the main difficulty of the proof, and we encapsulate it in the following Lemma.
\begin{lemma}\label{th:HamiltonMap}
Let $G$ be a directed, loopless graph, and let $v \in V(G)$.
Let $\mathcal{H}$ be the set of Hamilton cycles on $G$.
Let $\mathcal{G}$ be the set of permutations on $G$ with at least one fixed point and at most one cycle, such that $v$ is a vertex of the cycle if it exists.
Then, there is an injection $f_v$ from $\mathcal{H}$ to $\mathcal{G}$.
In addition, assuming that $G$ is not a directed cycle, there is a choice of $v$ for which the identity is not in the image of $f_v$.
\end{lemma}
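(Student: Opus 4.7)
The plan is to construct $f_v$ via a canonical shortening of $H$. Given a Hamilton cycle $H = v_0 \to v_1 \to \cdots \to v_{n-1} \to v_0$ with $v_0 = v$, let $k = k(H)$ be the smallest index in $\{1, \ldots, n-1\}$ with $(v_k, v_0) \in E(G)$; this is well-defined since the closing edge $(v_{n-1}, v_0)$ of $H$ lies in $G$. If $k = n-1$, set $f_v(H)$ to be the identity permutation. Otherwise $1 \le k \le n-2$, and I set $f_v(H)$ to be the permutation whose unique non-trivial cycle is $v_0 \to v_1 \to \cdots \to v_k \to v_0$ and whose fixed points are $v_{k+1}, \ldots, v_{n-1}$. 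This lies in $\mathcal{G}$ by construction: it is either the identity (which has fixed points and no cycles) or has a cycle of length $k+1 \ge 2$ through $v$ together with at least one fixed point.

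The injectivity of $f_v$ is the main difficulty. Two Hamilton cycles mapping to the same non-identity image must share the initial segment $v_0, v_1, \ldots, v_k$ and the tail vertex set $\{v_{k+1}, \ldots, v_{n-1}\}$, so they can differ only in how the tail is ordered. To eliminate this ambiguity, I would refine the construction so as to also encode the tail ordering into the output. A natural way is to iterate the shortening rule on the tail---viewed as a Hamilton path on the residual vertex set from the successor of $v_k$ in $H$ back to $v_0$---and to use local edge-existence conditions in $G$ to canonically recover the tail sequence from the image permutation. Verifying that the refined map is actually injective requires a careful inductive analysis on the length of the tail, in which each successive layer of refinement imposes additional edge-existence constraints that, taken together, uniquely determine $H$ from its image. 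This is the principal technical obstacle.

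For the identity-avoidance assertion, suppose $G$ is not a directed cycle. Since $G$ contains a Hamilton cycle, every vertex of $G$ has in-degree at least $1$; if every vertex had in-degree exactly $1$, then $G$ would have exactly $n$ edges, namely those of the Hamilton cycle, and would itself be a directed cycle, contradicting our hypothesis. Hence some vertex $v$ has in-degree at least $2$ in $G$; pick this $v$. For any Hamilton cycle $H$ on $G$, the vertex $v$ has an in-neighbor $u \neq v_{n-1}$, and $u = v_i$ for some $i \in \{1, \ldots, n-2\}$; hence $(v_i, v_0) \in E(G)$ and $k(H) \leq n-2$. Consequently $f_v(H)$ has a non-trivial cycle and is not the identity, completing this part of the plan.
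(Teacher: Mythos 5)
Your construction of $f_v$ as stated is not injective, and the place where you write that ``verifying that the refined map is actually injective requires a careful inductive analysis \dots\ This is the principal technical obstacle'' is precisely where the entire content of the lemma lives; the proposal stops short of the main point. Concretely, with your rule (take the smallest $k$ with $(v_k,v_0)\in E(G)$ and collapse the tail to fixed points), two Hamilton cycles sharing the prefix $v_0,\dots,v_k$ but traversing the tail in different orders receive the same image. For example, on $\{v_0,\dots,v_4\}$ take the arcs $(v_0,v_1),(v_1,v_0),(v_1,v_2),(v_2,v_3),(v_3,v_4),(v_4,v_0),(v_2,v_4),(v_4,v_3),(v_3,v_0)$: both $v_0v_1v_2v_3v_4$ and $v_0v_1v_2v_4v_3$ are Hamilton cycles, both have $k=1$, and both map to the $2$-cycle $(v_0\,v_1)$ with fixed points $\{v_2,v_3,v_4\}$. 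Note also that the image permutation can only record an ordered cycle through $v$ and an \emph{unordered} set of fixed points, so any ``refinement'' cannot encode the tail ordering into the output; the ordering must instead be forced by the edge structure of $G$, and your choice of cut point does not force it.

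The paper resolves exactly this issue by choosing the cut more carefully. It considers \emph{forward chords} of $C$ (chords $(v_i,v_j)$ whose induced cycle $C_{ij}$ contains $v_0$), partially orders them by inclusion of the complementary vertex sets $L_{ij}$, and cuts along the \emph{first minimal} forward chord $(v_s,v_t)$. Minimality guarantees that the fixed-point set $L_{st}$ contains no forward chord, which yields two recoverable facts: $v_s$ is the first vertex of the image cycle with an edge into $\fix(P)$ and has exactly one such edge (identifying $v_{s+1}$), and from each initial segment $\{v_{s+1},\dots,v_k\}$ of the tail there is exactly one edge into the remainder, so the tail order is reconstructed by induction. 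If there are no forward chords at all, $C$ is the unique Hamilton cycle and is sent to the identity. Your in-degree argument for the identity-avoidance clause is fine in spirit (and the paper's version instead picks $v$ to be the tail of a chord of the unique Hamilton cycle), but as it stands it rests on a map whose injectivity has not been, and with your cut rule cannot be, established.
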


Before proving Lemma \ref{th:HamiltonMap}, we show how it implies Theorem \ref{thm:ratioHalf}.

\begin{proof}[Proof of Theorem \ref{thm:ratioHalf}]
It is an easy observation that $(d/p)_G = 1/2$ if $G$ is a directed cycle.
Assume that $G$ is not a directed cycle.

We describe a family of injections $F_v$ from derangements on $G$ to non-derangement permutations on $G$, one injection for each vertex $v \in V(G)$.
This family of injections has the property there exists at least one $v$ for which the identity permutation is not in the image of $F_v$.
Hence, the total number of permutations is at least twice the number of derangements, plus one for the identity permutation.

For each derangement $D$ on $G$, let $F_v(D)$ be the union of all cycles of $D$ that do not contain $v$, together with the image of the cycle $C$ that contains $v$ under the function described in Lemma \ref{th:HamiltonMap} applied to the subgraph of $G$ induced by the vertices of $C$.

For the derangements that are Hamilton cycles, the map $F_v$ is exactly the map $f_v$ described in Lemma \ref{th:HamiltonMap}.
By Lemma \ref{th:HamiltonMap}, there is a choice of $v$ so that the identity permutation is not in the image of $f_v$.
For derangements that are not Hamilton cycles, each cycle that does not contain $v$ will be mapped to itself, and so the these derangements cannot map to the identity.
Hence, there is a choice of $v$ for which the identity is not in the image of $F_v$.

Given Lemma \ref{th:HamiltonMap}, it is easy to see that each permutation $P$ in the image of $F_v$ has a unique preimage $D$.
Indeed, the preimage of each cycle in $P$ that doesn't contain $v$ is the same cycle.
The preimage of the remaining vertices is their preimage under the map $f_v$ defined in Lemma \ref{th:HamiltonMap}, which is injective.
Hence, $F_v$ is injective, which is sufficient to prove Theorem \ref{thm:ratioHalf}.
\end{proof}

It remains to prove Lemma \ref{th:HamiltonMap}.

\begin{proof}[Proof of Lemma \ref{th:HamiltonMap}]
For any permutation $P$, we denote by $\fix(P)$ the set of fixed points of $P$.
The identity permutation is the unique permutation on $G$ such that $\fix(P) = V(G)$.

Let $v_0 = v$ and let $C=(v_0,v_1,v_2, \ldots,v_{n-1},v_0) \subseteq G$ be a Hamilton cycle on $G$.

Let $(v_i,v_j) \in G$ be a chord of $C$.
Note that $(v_i,v_j)$ completes a cycle $C_{ij}$ with edges from $C$, and there is a nonempty set $L_{ij}$ of vertices in $G$ that are not contained in $C_{ij}$.
Further, knowing $C$, it is easy to identify $i,j$ from either $C_{ij}$ or $L_{ij}$.

If $v_0$ is a vertex of $C_{ij}$, then we say that $(v_i,v_j)$ is a {\em forward chord} of $C$.

If there are no forward chords with respect to $C$, then $C$ is the only Hamilton cycle of $G$.
Indeed, suppose that $(v_0=u_0,u_1,u_2,\ldots,u_{n-1},u_0) \neq C$ is a Hamilton cycle of $G$.
Suppose that $(v_0, v_1, \ldots, v_i) = (u_0, u_1, \ldots, u_i)$, and $v_{i+1} \neq u_{i+1}$.
Then, $(u_i, u_{i+1})=(v_i, v_k)$ is a forward chord of $C$.
Indeed, since $k > i+1$, the sequence $(v_0, v_1, \ldots, v_i, u_{i+1} = v_k, v_{k+1}, \ldots, v_{n-1}, v_0=u_0)$ is a cycle that contains $v_0$.

We define a partial order on forward chords as follows.
Let $(v_i,v_j) \neq (v_k,v_l)$ be forward chords for $C$.
Then, $(v_i,v_j) \prec (v_k,v_l)$ if and only if $L_{ij} \subset L_{kl}$.
This relation inherits the property of being a partial order from the Boolean lattice.
We say that a forward chord $(v_i,v_j)$ is {\em minimal} if there is no forward chord $(v_k,v_l) \prec (v_i,v_j)$.
We say that the {\em first} minimal forward chord is a minimal forward chord $(v_i,v_j)$ such that $i$ is as small as possible.
Since there is at most one minimal forward chord starting at any vertex, the first minimal forward chord is unique.
See Figure \ref{fig:forwardChords}.

\begin{SCfigure}[][h]
\caption{The first minimal forward chord for this cycle is $v_1v_4$. The chord $v_7v_2$ is not forward, $v_1v_5$ is forward but not minimal, and $v_3v_6$ is forward and minimal but not first.}\label{fig:forwardChords}
\includegraphics[width=.6 \textwidth]{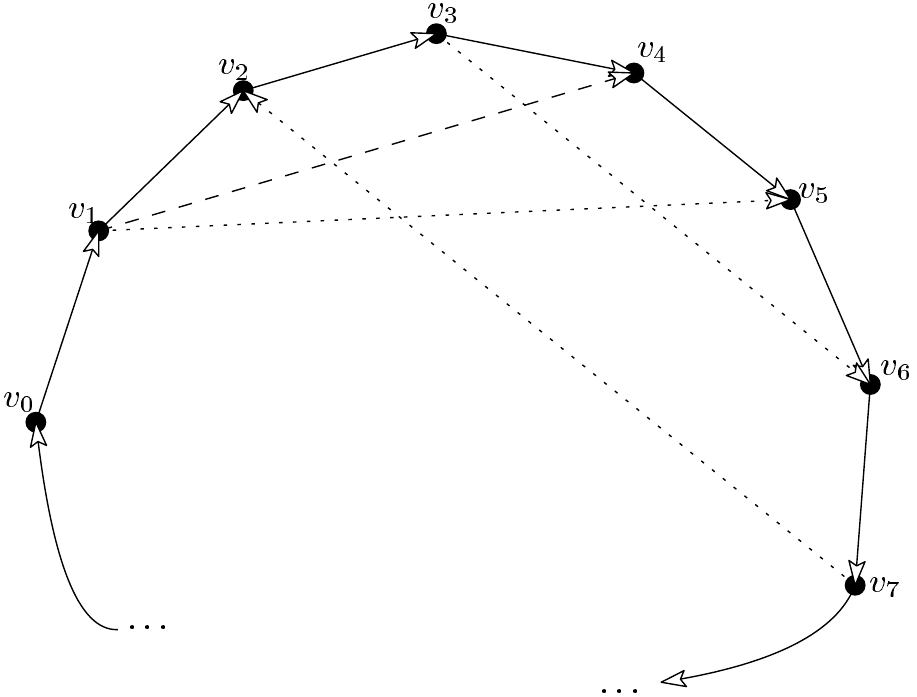}
\end{SCfigure}

We now describe how to map $C$ to a permutation.
If there are no forward chords with respect to $C$, we map the single Hamilton cycle of $G$ to the permutation consisting only of fixed points.
Otherwise, let $(v_s,v_t)$ be the first minimal forward chord.
We map $C$ to the permutation $P$ consisting of the cycle $C_{st}$ and the fixed points $\fix(P) = L_{st}$.

In order to show that the map we've described is injective, it will suffice to show that we can recover $C$ if we know $P$ and $v_0$.
To do this, we need to recover the identity of $v_s$ and $v_t$, and to find the order of $\fix(P)$ in $C$.

\begin{claim}
The first vertex in $C_{st}$ that has an edge into $\fix(P)$ is $v_s$, and $v_s$ has exactly one edge into $\fix(P)$.
\end{claim}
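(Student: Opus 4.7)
The plan is to unpack the definitions and then exploit both the $\prec$-minimality and the ``first'' property of $(v_s,v_t)$. Being a forward chord, $(v_s,v_t)$ has $0 \leq s < t \leq n-1$ with $t \geq s+2$, so $\fix(P) = L_{st} = \{v_{s+1}, \ldots, v_{t-1}\}$, and the vertices of $C_{st}$ in the cyclic order starting at $v_0$ are $v_0, v_1, \ldots, v_s, v_t, v_{t+1}, \ldots, v_{n-1}$. I will first check that $v_s$ has at least one edge into $\fix(P)$, then that it has no other such edge, and finally that no $v_i$ with $0 \leq i < s$ has any edge into $\fix(P)$.

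Existence is immediate: $(v_s, v_{s+1})$ is an edge of $C \subseteq G$ and $v_{s+1} \in L_{st}$. For uniqueness at $v_s$, suppose $(v_s, v_j)$ is another such edge with $s+1 < j \leq t-1$. Then $(v_s, v_j)$ is a chord, and $L_{sj} = \{v_{s+1}, \ldots, v_{j-1}\}$ avoids $v_0$, so it is still a forward chord. Since $v_{t-1} \in L_{st} \setminus L_{sj}$ we have $L_{sj} \subsetneq L_{st}$, hence $(v_s, v_j) \prec (v_s, v_t)$, contradicting the $\prec$-minimality of $(v_s, v_t)$.

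The harder step is ruling out edges from earlier vertices, where I expect the main subtlety to lie. Suppose $(v_i, v_j)$ is an edge with $0 \leq i < s$ and $s+1 \leq j \leq t-1$. Because $j > s \geq i+1$ this is a chord, and because $0 \leq i$ and $j \leq n-1$ the skipped set $L_{ij}$ avoids $v_0$, so $(v_i, v_j)$ is a forward chord. If it is itself minimal, then its starting index $i < s$ contradicts the definition of $(v_s, v_t)$ as the \emph{first} minimal forward chord. Otherwise, I pick a minimal forward chord $(v_k, v_l)$ with $L_{kl} \subsetneq L_{ij}$, which forces $k \geq i$ and $l \leq j \leq t-1$.

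The crux is the remaining case analysis on $k$: even though $(v_i, v_j)$ started earlier than $v_s$, the minimal chord below it might begin later, so the ``first'' property alone does not immediately suffice. However, since $l \leq t-1 < t$, we have $L_{kl} \subseteq \{v_{k+1}, \ldots, v_{t-2}\}$; so if $k \geq s$ then $L_{kl} \subsetneq L_{st}$, giving $(v_k, v_l) \prec (v_s, v_t)$ and contradicting $\prec$-minimality, while if $k < s$ the ``first'' property is again violated. Either outcome yields a contradiction, completing the proof.
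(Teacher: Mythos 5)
Your proof is correct and follows essentially the same route as the paper's: you rule out edges from $v_i$ with $i<s$ by passing to a minimal forward chord below $(v_i,v_j)$ and splitting on whether it starts before or after $v_s$, and you rule out a second edge from $v_s$ by exhibiting a forward chord strictly below $(v_s,v_t)$, just as the paper does (with more of the routine verifications written out). The only quibble is your normalization $0\le s<t\le n-1$: a forward chord may terminate at $v_0$ itself (the chord $(v_s,v_0)$ closes a cycle containing $v_0$), but writing $t=n$ in that case makes every inequality in your argument go through verbatim, so this is a labeling issue rather than a gap.
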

\begin{proof}
Suppose, for contradiction, that there is an edge $(v_i,v_j)$ with $i<s$ and $v_j \in \fix(P)$.
Then, either $(v_i,v_j)$ is minimal, or there is a minimal forward chord $(v_k, v_l) \prec (v_i,v_j)$.
If $(v_i,v_j)$ is minimal, then $(v_s,v_t)$ cannot be the first minimal forward chord, and we reach a contradiction.
If $(v_k, v_l) \prec (v_i,v_j)$, then either $k < s$ or $k \geq s$.
If $k < s$, then $(v_s, v_t)$ is not first, and we reach a contradiction.
If $k \geq s$, then $(v_s,v_t)$ is not minimal, and we reach a contradiction.

We still need to show that $v_s$ has only one edge into $\fix(P)$.
Suppose that there is an edge $(v_s,v_j) \in G$ with $j \neq s+1$ and $v_j \in \fix(P)$.
Then $(v_s,v_j)$ is a forward chord of $C$ with $(v_s,v_j) \prec (v_s,v_t)$, contradicting the choice of $(v_s,v_t)$ in the construction.
\end{proof}

Using this claim, we can identify $v_s$ and $v_t$, and we can identify $v_{s+1} \in \fix(P)$.
To do this, we simply start at $v_0$ and follow edges of $C_{st}$ until we find a vertex that has an edge into $\fix(P)$.
The vertex with an edge into $\fix(P)$ is $v_s$, the vertex in $\fix(P)$ that has an edge from $v_s$ is $v_{s+1}$, and the vertex that follows $v_s$ in $C_{st}$ is $v_t$.

Having identified $v_{s+1}$, we claim that we can determine the order of $\fix(P)$ in $C$.
Indeed, there are no forward chords in $\fix(P)$, since any such forward chord would precede $(v_s,v_t)$ in our partial order on forward chords, contradicting the minimality of $(v_s,v_t)$.
Hence, there is exactly one edge from any set $\{v_{s+1}, v_{s+2}, \ldots, v_{k}\}$ into $\{v_{k+1}, v_{k+2}, \ldots, v_{t-1}\}$.
Using this fact in an easy inductive argument, we can identify the order $\fix(P)$ in $C$.

It only remains to show that, under the assumption that $G$ is not a directed cycle, there is a choice of $v=v_0$ for which the identity permutation is not in the image of $f_v$.
It is already established that the permutation consisting of fixed points is only in the image of $f$ if there is exactly one Hamilton cycle $C$ on $G$.
If there is a chord $u,w$ of $C$, then $f_u(C)$ will include the cycle formed by $(u,w)$ together with edges of $C$, and hence the permutation consisting of fixed points will not be in the image of $f_u$. This completes the proof of Lemma \ref{th:HamiltonMap}.
\end{proof}

The proof of the main theorem is done.
Here is one other interesting consequence of Lemma \ref{th:HamiltonMap}.

\begin{corollary}\label{thm:hamilton}
For any directed graph $G=(E,V)$ that is not a directed cycle, there is a vertex $v \in V$ such that the number of cycles in $G$ that contain $v$ is at least twice the number of Hamilton cycles in $G$.
\end{corollary}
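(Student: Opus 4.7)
The plan is to derive the corollary directly from Lemma \ref{th:HamiltonMap} by a short counting argument on the set $\mathcal{G}$. Since $G$ is not a directed cycle, the lemma supplies a vertex $v$ for which the identity permutation lies outside the image of $f_v$; I would fix such a $v$ from the outset.

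Next, I would understand $\mathcal{G}$ concretely. An element of $\mathcal{G}$ has at least one fixed point and at most one non-trivial cycle, with that cycle (if present) containing $v$. A permutation with no non-trivial cycle is forced to be the identity, while a permutation with exactly one non-trivial cycle is determined by that cycle, since the remaining vertices are fixed points. Because the cycle is realized inside the directed graph $G$ and because the existence of at least one fixed point forces its length to be strictly less than $n$, the non-identity elements of $\mathcal{G}$ are in bijection with the non-Hamilton directed cycles of $G$ passing through $v$. Writing $s_v$ for the number of such non-Hamilton cycles, this yields $|\mathcal{G}| = 1 + s_v$.

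The injection $f_v \colon \mathcal{H} \to \mathcal{G}$ from Lemma \ref{th:HamiltonMap}, combined with the fact that the identity is not in its image, then gives
\[
|\mathcal{H}| \;=\; |f_v(\mathcal{H})| \;\leq\; |\mathcal{G}| - 1 \;=\; s_v.
\]
Since every Hamilton cycle automatically passes through $v$, the total number of directed cycles of $G$ through $v$ equals $s_v + |\mathcal{H}| \geq 2|\mathcal{H}|$, which is the conclusion of the corollary.

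There is no serious obstacle left at this stage: all the real work was done in the proof of Lemma \ref{th:HamiltonMap}. The only point that needs a moment's care is verifying that the non-identity elements of $\mathcal{G}$ are in bijection with non-Hamilton cycles through $v$ (in particular, that the definition of $\mathcal{G}$ forces cycle length less than $n$), and that Hamilton cycles themselves do contribute to the count of cycles through $v$ on the right-hand side.
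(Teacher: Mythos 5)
Your argument is correct and is precisely the paper's (one-line) proof, spelled out: for the vertex $v$ supplied by Lemma \ref{th:HamiltonMap}, the injection $f_v$ sends Hamilton cycles into the non-identity elements of $\mathcal{G}$, which are exactly the non-Hamilton cycles of $G$ through $v$, and adding back the Hamilton cycles (all of which pass through $v$) gives the factor of two. The bookkeeping you flag as needing care — that a non-identity element of $\mathcal{G}$ is determined by its unique cycle, which has length strictly less than $n$ — is exactly the right point to verify, and it holds.
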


Indeed, for an appropriate choice of $v$, Lemma \ref{th:HamiltonMap} gives an explicit injection from the set of Hamilton cycles that contain $v$ to cycles that are not Hamilton.

If $G$ is a directed cycle together with an additional edge, then there are two cycles in $G$, one of which is Hamilton.
In this case, the conclusion of Corollary \ref{thm:hamilton} does not hold for those vertices not in the second cycle of $G$, and cannot be improved for those vertices that are in the second cycle of $G$.
The $2$-blowup of a directed cycle, discussed in Section \ref{sec:constructions}, gives a more interesting example for which the conclusion of Corollary \ref{thm:hamilton} is tight.
\section{Matchings in general graphs}\label{sec:matchingsInGeneralGraphs}
Let us now proceed to the proof of Theorem \ref{th:matchingInGeneral}.

\begin{proof}[Proof of Theorem \ref{th:matchingInGeneral}]
Let $G$ be a graph and $M=\{v_1u_1,v_2u_2,\ldots, v_nu_n\}$ be a perfect matching in $G$. By assigning $v_i,u_i$ one each to sets $L$ and $R$ for all $i$ we obtain $2^{n-1}$ different bipartitions of the vertex set. Let us denote by $G_1,\ldots, G_{2^{n-1}}$ the bipartite graphs obtained by only retaining the edges of $G$ between $L$ and $R$ for each of these bipartitions.

Note that $M$ is a perfect matching in each of the $G_i$ by construction. Let $a_i$ denote the number of perfect matchings of $G_i$ which intersect $M$ and $b_i$ the number of perfect matchings of $G_i$ that do not. By Theorem \ref{th:matchingInBipartiteGraph} we know $a_i \ge b_i$.

Furthermore, if $M'$ is any perfect matching of $G$ then $M \cup M'$ is bipartite, which means $M'$ is contained in at least one graph $G_i$. This implies that the number of perfect matchings in $G$ which don't intersect $M$ is upper bounded by $b_1+\cdots+b_n\le a_1+a_2+\cdots+a_n.$

On the other hand every perfect matching $M'$ of $G$ intersecting $M$ appears in at most $2^{n-1}$ of the $G_i$, so there are at least $(a_1+\cdots+a_n)/2^{n-1}$ such matchings. In particular, this is at least $1/2^{n-1}$ times the number of perfect matchings in $G$ which don't intersect $M$. This means that the proportion of perfect matchings which intersect $M$ is at least $1/(2^{n-1}+1)$ of the total number of perfect matchings as claimed.

\ignore{Let $G$ be any graph on $2n$ vertices and $M$ a perfect matching in $G$.  By selecting one vertex from each edge of $M$, there are $2^{n-1}$ ways to partition the vertices of $G$ into sets $\{L, R\}$ such that each edge of $M$ meets $L$ exactly once.  For each such bipartition, let $G[L,R]$ denote the bipartite subgraph of $G$ obtained by retaining only the edges that meet both $L$ and $R$.

Note that if $M'$ is any perfect matching of $G$, then $M \cup M'$ is bipartite and thus there exists a bipartition for which $M' \in G[L,R]$.  Using this fact followed by Theorem \ref{th:matchingInBipartiteGraph}, we obtain
\begin{eqnarray*}
|\{M' : M \cap M' = \emptyset\}| &\leq& \sum_{\{L,R\}} |\{M' :  M \cap M' = \emptyset \text{ and } M' \subseteq G[L,R]\}|\\
 &\leq& \sum_{\{L,R\}} |\{M' :  M \cap M' \neq \emptyset \text{ and } M' \subseteq G[L,R]\}|\\
  &\leq& 2^{n-1} |\{M' : M \cap M' \neq \emptyset\}|,
\end{eqnarray*}
which establishes the lower bound.}

\hrulefill

Our construction is the graph $H$ defined on $\{v_1,\ldots, v_{2n},$ $u_1,\ldots,u_{2n}\}$ as follows. For each $i=1,\ldots, n$ we put all the edges between $\{v_{2i-1},v_{2i}\}$ and $\{u_{2i-1},u_{2i}\}$ and make $v_{2i-1}v_{2i}$ and $u_{2i}u_{2i+1}$ edges (with $u_{2n+1}=u_1$).

\begin{figure}[h]
    \centering
    \begin{tikzpicture}[scale=0.6][line cap=round,line join=round]
	
	\defPt{-1}{-3}{u1}
	\defPt{1}{-3}{u2}
	\defPt{-1}{-5}{v1}
	\defPt{1}{-5}{v2}
	
	\defPt{3}{-1}{u3}
	\defPt{3}{1}{u4}
	\defPt{5}{-1}{v3}
	\defPt{5}{1}{v4}

	\defPt{1}{3}{u5}
	\defPt{-1}{3}{u6}
	\defPt{1}{5}{v5}
	\defPt{-1}{5}{v6}

	\defPt{-3}{1}{u7}
	\defPt{-3}{-1}{u8}
	\defPt{-5}{1}{v7}
	\defPt{-5}{-1}{v8}

	\def \shrinkx {3}
	\def \r {0.8}
	\pgfmathsetmacro{\p}{\r*\shrinkx}

	\draw[line width=2pt] (u1) -- (v1) -- (u2) -- (v2) -- (u1);
	\draw[line width=2pt] (u3) -- (v3) -- (u4) -- (v4) -- (u3);
	\draw[line width=2pt] (u5) -- (v5) -- (u6) -- (v6) -- (u5);
	\draw[line width=2pt] (u7) -- (v7) -- (u8) -- (v8) -- (u7);
	
	\draw[line width=2pt,red] (v1) -- (v2);
	\draw[line width=2pt,red] (v3) -- (v4);
	\draw[line width=2pt,red] (v5) -- (v6);
	\draw[line width=2pt,red] (v7) -- (v8);
	
	\draw[line width=2pt,red] (u2) -- (u3);
	\draw[line width=2pt,red] (u4) -- (u5);
	\draw[line width=2pt,red] (u6) -- (u7);
	\draw[line width=2pt,red] (u8) -- (u1);

	\foreach \i in {1,...,8}
	{
		\draw (u\i) \smvx;
		\node[] at ($0.85*(u\i)$) {$u_{\i}$};
		\draw (v\i) \smvx;
	    \node[] at ($1.1*(v\i)$) {$v_{\i}$};
	}

	\end{tikzpicture}
    \caption{The construction in Theorem \ref{th:matchingInGeneral} for $n=4$. The red edges form $M_0$.}
    \label{fig:matchingExample}
\end{figure}
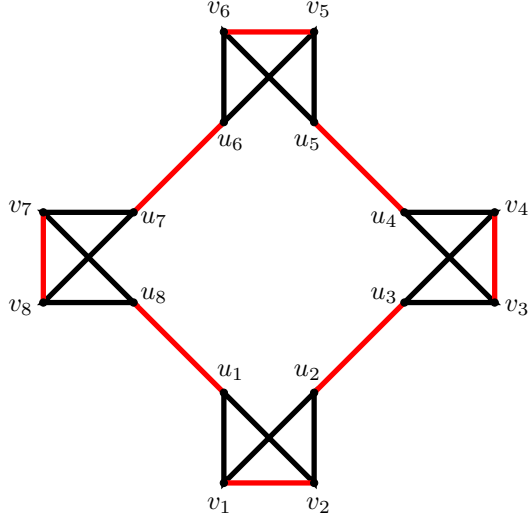

Let $M_0$ be the perfect matching in $G$ given by $\{v_1v_2,\ldots, v_{2n-1}v_{2n}\}$ and $\{u_2u_3,u_4u_5,\ldots, u_{2n}u_1\}$. Notice that if for each $i$ we choose either $v_{2i-1}u_{2i-1}$ and $v_{2i}u_{2i}$ or $v_{2i-1}u_{2i}$ and $v_{2i}u_{2i-1}$ we get a perfect matching in $G$, so there are $2^n$ perfect matchings disjoint from $M_0$.  And in fact $H$ has no other perfect matchings, since (perhaps after some thought) it is clear that any perfect matching containing an edge of $M_0$ must coincide with $M_0$.
\end{proof}

The argument for our lower bound could perhaps be improved in two ways.  First, note that every perfect matching $M'$ appears in $2^{c-1}$ bipartitions $G[L,R]$, where $c$ is the number of components of $M \cup M'$.  For matchings disjoint with $M$, we used the crude lower bound of $1$ and for the others an upper bound of $2^{n-1}$.  Although this may feel like giving up quite a lot, these were actually tight in our construction $H$.  The second possible area for improvement would be to remove the slack incurred in our application of Theorem \ref{th:matchingInBipartiteGraph}.

\section{Random (di)graphs}\label{sec:randomGraphs}
Here, we outline the proof of Proposition \ref{th:randomRatios}.  Our approach for this is a routine second-moment method in the spirit of Janson's \cite{janson1994numbers}, where the main idea is to first condition on the number of edges (resp.\ arcs).  We direct the readers to \cite{alon2004probabilistic} for a general text on probabilistic combinatorics and to the papers \cite{janson1994numbers} and \cite{frieze1995analysis} for details of the argument we outline below.

\begin{proof}
We first outline the proof for the directed graph case, discussing modifications needed for the graph case at the end.  Following the ideas in \cite{janson1994numbers, frieze1995analysis}, let $DG = DG_{n,m}$ denote a uniformly random digraph on $n$ vertices having exactly $m$ arcs.  Let $X$ denote the number of derangements of $DG$ and let $Y$ denote the number of permutations.  Note for a digraph $H$ on $[n]$ having $t$ arcs, the probability that $H$ is contained in $DG$ depends only on $t$ and is given by
\[
\mathbb{P}(H \subseteq DG) =f(t) := {n(n-1) - t \choose m-t} {n(n-1) \choose m}^{-1}.
\]
For $1 \leq t \leq 2n$ (and $\varepsilon n^2 /2 \leq m \leq n^2$), this is given by
\[
\mathbb{P}(H \subseteq DG) =f(t) = \left(\dfrac{m}{n(n-1)} \right) ^{t} \exp \left\{ \dfrac{-t^2}{2} \left(\dfrac{1}{m} - \dfrac{1}{n(n-1)} \right) + \mathcal{O}(1/n) \right\}.
\]
When $t=n$, this gives the probability that a given derangement is contained in $DG$.  Let $Der(n)$ denote the number of derangements of the set $[n]$, and define $\gamma = m / [n(n-1)]$.  Using that $n! /e-1 \leq Der(n) \leq n! /e+1$, we have
\begin{eqnarray*}
\mathbb{E}[X] &=& f(n) Der(n) = \gamma^n \exp \left[ \left(\dfrac{-n^2}{2m} + \dfrac{n}{2(n-1)} \right) + \mathcal{O}(1/n) \right] Der(n)\\
&=& \gamma ^n \exp \left[ \dfrac{-1}{2\gamma} + \dfrac{1}{2} + \mathcal{O}(1/n) \right]Der(n) = n! \gamma ^n \exp \left[\dfrac{-1}{2\gamma} - \dfrac{1}{2} + \mathcal{O}(1/n) \right]
\end{eqnarray*}
as the expected number of derangements of $DG_{n,m}$.

Similarly, if $\sigma$ is a permutation of $[n]$ with $k$ fixed points, then the probability that $\sigma$ is a permutation of $DG$ is $f(n-k)$.  Therefore, the expected number of permutations of $DG$ is given by
\begin{eqnarray*}
\mathbb{E}[Y] &=& \sum_{k = 0} ^{n} f(n-k) {n \choose k} Der(n-k)\\
&=& n! \gamma^n \exp\left[\dfrac{-1}{2\gamma} - \dfrac{1}{2} + \mathcal{O}(1/n)\right]\\
& & \qquad \qquad \times \sum_{k=0} ^{n} \dfrac{(1/\gamma)^{k}}{k!} \exp\left[\dfrac{2kn - k^2}{2} \left(\dfrac{1}{m} - \dfrac{1}{n(n-1)} \right) \right]\\
&=& n! \gamma^n \exp\left[\dfrac{1}{2\gamma} - \dfrac{1}{2} + \mathcal{O}(\log(n)/n)\right],
\end{eqnarray*}
where in the last line, we estimate the sum by the Taylor series for $e^{1/\gamma}$.  The slightly enlarged (suboptimal) error term comes from a very crude bound estimating the sum based on whether or not $(1/\gamma)^k / k! \leq 1/n$.

After this, to finish the directed graph case we need only estimate the variances of $X$ and $Y$.  Routine computations almost identical to those in \cite{frieze1995analysis} show that $\mathbb{E}[X^2] = \mathbb{E}[X]^2 (1 + o(1))$ and $\mathbb{E}[Y^2] = \mathbb{E}[Y]^2 (1 + o(1))$.  Therefore, by Chebyshev's inequality, with high probability $X/Y = (1 \pm o(1)) \mathbb{E}[X] / \mathbb{E}[Y] = (1 \pm o(1)) e^{-1/\gamma}.$  The proof concludes by the standard coupling of $DG_{n,q}$ and $DG_{n,m}$ using the fact that $\gamma = q + o(1)$ with high probability (since the number of arcs of $DG_{n,q}$ is well concentrated).  See \cite{janson1994numbers, frieze1995analysis} for full details of this method.

The graph case is almost identical but with $\gamma = 2m / n^2$.  In the directed graph case, a guiding heuristic in computing moments is that two random derangements should share approximately as many edges as two random permutations.  Whereas in the graph case, the heuristic is that for a random permutation, the number of fixed points and the number of cycles of length 2 should be approximately independent.
\end{proof}

\section{Open problems}\label{sec:openProblems}

Finally, we mention a few open problems, mostly inspired by studying derangements and permutations on graphs.

\begin{enumerate}
	\item Theorem \ref{th:matchingInGeneral} shows that in general, a perfect matching may intersect with only an exponentially small proportion of all perfect matchings, but there is a sizable gap between our construction and the general lower bound.  What is the correct base for the exponent?
	\item Let $S = \{(d/p)_G \ : \ \text{$G$ is a digraph}\}$ be the set of values arising as a ratio $(d/p)$.  We have shown that $S \subseteq [0,1/2]$ and that $S$ is dense in $[0,1/e]$ (by taking the appropriate Erd\H{o}s--Renyi graphs).  Is $S$ dense in $[0,1/2]$?  If so, is $S$ equal to $[0,1/2] \cap \mathbb{Q}$?
	\item What can be said about $(d/p)$ for regular directed graphs with degree greater than $n/2$, but much less than $n$?
\end{enumerate}

\bibliographystyle{plain}
\bibliography{derangements}{}

\begin{thebibliography}{1}

\bibitem{alon2004probabilistic}
Noga Alon and Joel~H Spencer.
\newblock {\em The probabilistic method}.
\newblock John Wiley \& Sons, 2004.

\bibitem{bregman1973some}
Lev~Meerovich Br{\`e}gman.
\newblock Some properties of nonnegative matrices and their permanents.
\newblock In {\em Doklady Akademii Nauk}, volume 211, pages 27--30. Russian
  Academy of Sciences, 1973.

\bibitem{clark2013graph}
Pete~L. Clark.
\newblock Graph derangements.
\newblock {\em Open Journal of Discrete Mathematics}, 3(4):183, 2013.

\bibitem{egorychev1981solution}
Georgii~Petrovich Egorychev.
\newblock The solution of van der {W}aerden's problem for permanents.
\newblock {\em Advances in Mathematics}, 42(3):299--305, 1981.

\bibitem{falikman1981proof}
Dmitry~I Falikman.
\newblock Proof of the van der {W}aerden conjecture regarding the permanent of
  a doubly stochastic matrix.
\newblock {\em Mathematical Notes}, 29(6):475--479, 1981.

\bibitem{frieze1995analysis}
Alan Frieze and Mark Jerrum.
\newblock An analysis of a {M}onte {C}arlo algorithm for estimating the
  permanent.
\newblock {\em Combinatorica}, 15(1):67--83, 1995.

\bibitem{janson1994numbers}
Svante Janson.
\newblock The numbers of spanning trees, {H}amilton cycles and perfect
  matchings in a random graph.
\newblock {\em Combinatorics, Probability and Computing}, 3(1):97--126, 1994.

\bibitem{lovasz2009matching}
L{\'a}szl{\'o} Lov{\'a}sz and Michael~D Plummer.
\newblock {\em Matching theory}, volume 367.
\newblock American Mathematical Soc., 2009.

\bibitem{penrice1991derangements}
Stephen~G Penrice.
\newblock Derangements, permanents, and christmas presents.
\newblock {\em The American mathematical monthly}, 98(7):617--620, 1991.

\end{thebibliography}

\end{document}